\numberwithin{equation}{section}
\newtheorem{theorem}{{\bf Theorem}}[section]
\newtheorem{proposition}{{\bf Proposition}}[section]
\newtheorem{lemma}{{\bf Lemma}}[section]
\newtheorem{corollary}{{\bf Corollary}}[section]
\newtheorem{remark}{{\bf Remark}}[section]
\date{}
\begin{document}
\title{Tau functions of the charged free bosons}
\author{Naihuan Jing}
\address{School of Mathematics, South China University of Technology,
Guangzhou, Guangdong 510640, China}
\address{Department of Mathematics, North Carolina State University, Raleigh, NC 27695, USA}
\email{jing@math.ncsu.edu}
\author{Zhijun Li}
\address{School of Mathematics, South China University of Technology,
Guangzhou, Guangdong 510640, China}
\email{zhijun1010@163.com}
\thanks{{\scriptsize
\hskip -0.6 true cm MSC (2010): Primary: 17B37; Secondary: 58A17, 15A75, 15B33, 15A15.
\newline Keywords: tau functions, boson-boson correspondence, vertex operator algebras, symmetric functions.
\newline Corresponding author: zhijun1010@163.com
}}

\maketitle
\begin{abstract}
 We study bosonic tau functions 
 in relation with the charged free bosonic fields. It is proved that up to a constant
 the only tau function in the Fock space $\mathcal{M}$ is the vacuum vector, and
 some tau functions  were given in the completion $\widetilde{\mathcal{M}}$ using Schur functions. 
 We also give a new proof of Borchardt's identity and obtain several $q$-series identities by using
 the boson-boson correspondence.
\end{abstract}
\section{Introduction}
The $bc$ fermionic fields and charged free bosons are important in conformal field theory partly due to their
applications in 
 free field realizations of affine Lie algebras and related algebras (cf. \cite{ FMS1985} and references therein). In this regard the $bc$ fermionic fields
 provide  $\mathfrak{gl}_{\infty}$-modules, level one $\widehat{\mathfrak{gl}}_{\infty}$-modules \cite{KRR2013,KaVa1987}, and
 a free field realization of $\mathcal{W}_{1+\infty}$ algebra in positive integral central charge 
  \cite{FKRW1995}. The $bc$ fermionic fields also provide a Lie theoretic approach to the celebrated KP (Kadomtsev-Petviashvili) hierarchy\cite{DJKM1981a,DJKM1981B}.
 One can formulate the Hirota equation for the KP hierarchy as:
\begin{align}\label{e:KP}
\mathrm{Res}_{z}b(z)\otimes c(z)(\tau\otimes \tau)=0,
\end{align}
where $b(z)$ and $c(z)$ are fermionic fields satisfying \eqref{e:bc}. The solutions $\tau$ of \eqref{e:KP} are called the KP tau functions. Under
the boson-fermion correspondence, the tau functions can be expressed as certain symmetric functions. In particular, one
can get polynomial soliton solutions of the KP hierarchy
\cite{KRR2013, CWW2015} in the Fock space representations.

The charged free bosons $\varphi(z)$ and $\varphi^{*}(z)$ enjoy similar properties to the $bc$ fermionic fields in conformal field theory. They also
 provide $\mathfrak{gl}_{\infty}$-modules, level $-1$ $\widehat{\mathfrak{gl}}_{\infty}$-modules,
 and a free field realization of $\mathcal{W}_{1+\infty}$ algebra with negative integral central charge \cite{KaVa1987,Li2011,Wang1997}. By the boson-boson correspondence or FMS bosonization \cite{BaFl2015,FMS1985,Li2011}, one can also consider the corresponding PDEs and the bosonic tau functions as solutions of the equation \cite{VOS2012}
\begin{align}\label{sp2}
\mathrm{Res}_{z}\varphi^{*}(z)\otimes \varphi(z)(\tau\otimes \tau)=0.
\end{align}

Our first result is to show that
 the only bosonic tau function in the Fock space $\mathcal{M}$ is the vacuum vector $|0\rangle$ up to constant. So the natural
 tau functions in this case lie in the completion $\widetilde{\mathcal M}$. In fact we will show that $\tau= \exp\left(\sum_{i\geq 0,j>0}c_{i,j}\varphi^{*}_{-i}\varphi_{-j}\right)\cdot |0\rangle$ in the space $\widetilde{\mathcal{M}}$ are bosonic KP-like tau functions.

 The difficulty to derive equivalent forms of tau functions in the completion of the polynomial space appearing in charged free bosons case in contrast to the $bc$ fermionic fields case is partly due to the following heuristic reason: in both cases the action of exponentiating an element of $\mathfrak{gl}_{\infty}$ on the vacuum element gives us tau functions, but only in the fermionic case this exponential is finite.
Our next result is to give explicit formulas for some families of the bosonic tau functions in terms of Schur functions in several sets
 of variables. As an example, we obtain that a special bosonic hierarchy equation is a harmonic equation.

The relation between the boson-boson correspondence and the $bc$ fermionic fields has been studied by Wang \cite{Wang1997}.
With the help of the correspondence, we give a new proof of Borchardt's identity
\begin{align*}
\det\left(\frac{1}{(z_{i}-w_{j})^{2}}\right)_{1\leq i,j\leq n}=\det\left(\frac{1}{z_{i}-w_{j}}\right)_{1\leq i,j\leq n}\mathrm{perm}\left(\frac{1}{z_{i}-w_{j}}\right)_{1\leq i,j\leq n}.
\end{align*}
By calculating the $q$-dimension of the (bosonic and fermionic) Fock space of $bc$ fermionic fields and charged free bosons
with combinatorial method and using the results from \cite{FF1992,Wang1997}, we obtain families $q$-series identities($l\geq 0$):
\begin{align*}
&\sum_{m\geq 0}\frac{q^{m}}{(q;q)_{m}(q;q)_{m+l}}=\frac{1}{(q;q)^{2}_{\infty}}\sum_{s\geq 0}(-1)^{s}q^{\frac{s(s+1)}{2}+sl},\\
&\sum_{m\geq0}\frac{q^{m^{2}+(l+1)m}}{(q;q)_{m}(q;q)_{m+l}}=\frac{1}{(q;q)_{\infty}}\sum_{s\geq 0}(-1)^{s}q^{\frac{s(s+1)}{2}+sl},\\
&\sum_{m\geq0}\frac{q^{m^{2}+ml}}{(q;q)_{m}(q;q)_{m+l}}= \frac{1}{(q;q)_{\infty}}.
\end{align*}
The special case of $l=0$ is the famous Euler identity.

The paper is organized as follows. In section 2, we review some basic results of simple lattice vertex algebras, Schur functions and $bc$ fermionic fields. In section 3, we first recall some relations and bosonization of charged free bosons, then we give some concrete forms of tau functions in the completion of the polynomial space of the derived Hirota equations, and prove Borchardt's identity in terms of
the boson-boson correspondence. We have shown that a particular bosonic hierarchy equation is a harmonic equation. In section 4, we consider the $q$-dimension in two ways to derive some $q$-series identities.
\\
\\

\section{Preliminaries}
\subsection{Simple lattice vertex algebras}
The lattice vertex operator algebras are important algebraic structures generalizing finite dimensional simple Lie algebras
 with numerous applications \cite{FLM, KRR2013, DoLe1993, LiLep, Xu2016} (see also \cite{Ada2003}). We briefly review simple lattice vertex algebras here.

 Let $L$ be an integral lattice spanned by the basis $\gamma^i$ with a bilinear form $(\cdot\mid \cdot) : L\times L\rightarrow \mathbb{Z}$
 and $\mathfrak{h}=\mathbb{C}\otimes _{\mathbb{Z}} L$ its complexification.
The \textit{twisted group algebra} $\mathbb{C}_{\epsilon}[L]$ is the algebra generated by $e^{\gamma}$ ($\gamma\in L$) with the twisted multiplication
such that
\begin{align}
\notag e^{\gamma}e^{\delta}=\epsilon(\gamma,\delta)e^{\gamma+\delta},\qquad\qquad (\gamma,\delta\in L),
\end{align}
where $\epsilon:L\times L\rightarrow \{\pm 1\}$ is the 2-cocycle such that:
\begin{gather}
\notag \epsilon(\gamma,\delta)\epsilon(\delta,\gamma)=(-1)^{(\gamma\mid \delta)+(\gamma\mid \gamma)(\delta\mid \delta)}.
\end{gather}

 Let $\widehat{\mathfrak{h}}=\mathfrak{h}[t,t^{-1}]+\mathbb{C}c$ be the affinization of $\mathfrak{h}$, and
 $S$ the symmetric algebra of the commutative subalgebra $\widehat{\mathfrak{h}}^{-}=\sum_{j< 0}\mathfrak{h}\otimes t^{j}$. Write $h_{j}:=h\otimes t^{j}$.
  The \textit{lattice vertex algebra} $V_{L}=S\otimes \mathbb{C}_{\epsilon}[L]$ is the space generated by elements of the form $\gamma^1_{i_{1}}\dots \gamma^1_{i_{j}}\dots \gamma^n_{k_{1}}\dots\gamma^n_{k_{l}}e^{\gamma}$ for $\gamma\in L,\gamma^k\in \mathfrak{h}$ with parity $$p\big{(}\gamma^1_{i_{1}}\dots \gamma^1_{i_{j}}\dots \gamma^n_{k_{1}}\dots\gamma^n_{k_{l}}e^{\gamma}\big{)}\equiv(\gamma\mid\gamma)   \mod 2$$
and the state-field correspondence is given by $(n_{i}\in \mathbb{Z}_{+},h^i\in \mathfrak{h},\gamma\in L)$:
\begin{gather}
\notag Y(h^1_{-n_{1}-1}h^2_{-n_{2}-1}\dots e^{\gamma},z)=:\partial^{(n_{1})}h^1(z)\partial^{(n_{2})}h^2(z)\dots \Gamma_{\gamma}(z):~,
\end{gather}
where
\begin{align}
&h(z)=\sum_{n\in \mathbb{Z}}h_nz^{-n-1},~~
\partial^{(n)}h(z)=\frac{\partial^{n}}{n!}h(z),~~ \\ &\Gamma_{\gamma}(z)=Y(e^{\gamma},z)=e^{\gamma}z^{\gamma_{0}}\exp\left(\sum_{j<0}\frac{z^{-j}}{-j}\gamma_{j}\right)\exp\left(\sum_{j>0}\frac{z^{-j}}{-j}\gamma_{j}\right),
\end{align}
where $:\ \ :$ is the normal ordered product \cite{FLM}.

 For convenience we collect commutation relations of the fields $h(z) (h\in \mathfrak{h})$
 and $Y(e^{\gamma},z)$ as follows:
\begin{align}
\notag &[h(z), h^{\prime}(w)]=(h\mid h^{\prime})\partial_w\delta(z-w) \qquad &(h,h^{\prime}\in \mathfrak{h}),\\
\notag &[h(z), Y(e^{\gamma},w)]=(h\mid \gamma)Y(e^{\gamma},w)\delta(z-w)   \qquad  &(h\in \mathfrak{h}, \gamma\in L),\\
\notag &[Y(e^{\gamma},z), Y(e^{\delta},w)]=\sum_{n\geq 0}\frac{Y(e_{n}^{\gamma}e^{\delta},w)}{n!}\partial_{w}^{n}\delta(z-w)  \qquad &(\gamma,\delta\in L),
\end{align}
where  $[a,b]=ab-(-1)^{p(a)p(b)}ba $,~~
 $\delta(z-w)=\sum_{i\in \mathbb{Z}}z^{-i-1}w^{i}$, and $e^{\gamma}_ne^{\delta}$ is the $n$th product. 
 
\subsection{Schur polynomials}
The \textit{complete symmetric function} $h_{k}(x)$ in the variables $x_1,x_2,\cdots$ is defined by \cite{Mac1995}
\begin{align*}
\sum_{k=0}^{\infty}h_{k}(x)z^{k}=\prod^{\infty}_{i=1}\frac{1}{1-x_iz}.
\end{align*}
To each partition $\lambda=\{\lambda_1\geq \lambda_2\geq\dots\geq \lambda_l> 0\}$ we associate the \textit{Schur function} $s_\lambda(x)$ defined by
\begin{align*}
s_{\lambda}(x)=\det\left(h_{\lambda_{i}-i+j}(x)\right)_{1\leq i,j\leq l}.
\end{align*}
Introduce the variable $t_n$, where $t_n=\frac{1}{n}\sum^{\infty}_{i=1}x^{n}_{i}$ for all $n\geq 1$.
Then $h_k(x)$ is a polynomial $S_{k}(t)$ in the $t_n$ called the {\it elementary Schur polynomial} \cite{KRR2013}, and can be
defined by the series expansion
\begin{align}\label{e:Schur}
\sum_{k=0}^{\infty}S_{k}(t)z^{k}=\exp\left(\sum^{\infty}_{n=1}t_{n}z^{n}\right).
\end{align}
Explicitly, one has
\begin{align}
\notag&S_k(t)=0~~~~\text{for}~~~~k<0,~~S_0(t)=1,\\
\label{e:Schur1}&S_k(t)=\sum_{k_1+2k_2+3k_3+\dots=k}\frac{t^{k_1}_1}{k_1!}\frac{t^{k_2}_2}{k_2!}\frac{t^{k_3}_3}{k_3!}\cdots~~\text{for} ~~k>0.
\end{align}
The Schur polynomial $S_{\lambda}(t)$ associated to partition $\lambda$ is given by the Jacobi-Trudi formula:
\begin{align}
S_{\lambda}(t)=\det\left(S_{\lambda_{i}-i+j}(t)\right)_{1\leq i,j\leq l(\lambda)}.
 \end{align}

\subsection{The $bc$ fermionic fields}
 Recall that the $bc$ fermionic fields are
\begin{align}
b(z)=\sum_{i\in \mathbb{Z}}b(i)z^{-i},~~~~c(z)=\sum_{i\in \mathbb{Z}}c(i)z^{-i-1}
\end{align}
with the commutation relations
\begin{align}\label{e:bc}
\{b(i),c(j)\}=\delta_{i,-j},~~~~\{b(i),b(j)\}=\{c(i),c(j)\}=0,
\end{align}
where $\{A,B\}=AB+BA$.
The operator product expansions (OPE) of $b(z)$ and $c(z)$ are 
\begin{align}\label{sp4}
b(z)c(w)\sim \frac{1}{z-w},~~c(z)b(w)\sim \frac{1}{z-w}. 
\end{align}

Let $\mathcal{F}$ be the Fock space spanned by negative (resp. non-positive) modes of $c(z)$ (resp. $b(z)$),
i.e., $\mathcal F$ is generated by the vacuum vector $|0\rangle$ subject to the relations 
\begin{align}\label{eq3}
b(i+1)|0\rangle=c(i)|0\rangle=0,~~~~i\geq 0.
\end{align}

 Then \cite{Li2011,KaVa1987,Wang1997}
\begin{align}
\notag j^{bc}(z)=:c(z)b(z):=\sum_{n\in \mathbb{Z}}j^{bc}_{n}z^{-n-1}
\end{align}
is a free boson with commutation relations
\begin{align}
[j^{bc}_{m},j^{bc}_{n}]=m\delta_{m,-n},~~~~~~[j^{bc}_{m},b(n)]=-b(m+n),~~~~[j^{bc}_{m},c(n)]=c(m+n).
\end{align}
We then have the charge decomposition of $\mathcal{F}$ according to eigenvalues of $j^{bc}_{0}$:
\begin{align*}
\mathcal{F}=\bigoplus_{l\in \mathbb{Z}}\mathcal{F}^{l}.
\end{align*}
Denoted by $\overline{\mathcal{F}}$ the Fock space generated by the fields $\partial b(z)$ and $c(z)$ with the vacuum vector $|0\rangle$, and $\overline{\mathcal{F}}$ also has the charge decomposition: $\overline{\mathcal{F}}=\bigoplus_{l\in \mathbb{Z}}\overline{\mathcal{F}}^{l}$,
according to $j^{bc}_{0}$-eigenvalues.

 We further consider \cite{Wang1997}
\begin{align*}
T^{bc}(z)=:\partial b(z)c(z):=\sum_{n\in \mathbb{Z}}L_{n}z^{-n-2}
\end{align*}
which is a Virasoro field with the commutation relations
\begin{align}
[L_{m},L_{n}]&=(m-n)L_{m+n}-\frac{m^{3}-m}{6}\delta_{m,-n},\\
\label{ch2-3}[L_{m},b(n)]&=-(m+n)b(m+n),~[L_{m},c(n)]=-nc(m+n).
\end{align}

The boson-fermion correspondence
\cite{Fr1981, DKM1982, KRR2013, Jing1991}
realizes the $bc$ fermionic fields by the lattice vertex operators
\begin{align}\label{ch2-1}
 b(z)=Y(e^{\alpha},z),~~~~c(z)=Y(e^{-\alpha},z),
\end{align}
where $(\alpha|\alpha)=1$. Thus
\begin{align}\label{ch2-2}
T^{bc}(z)=\frac12(:\alpha(z)\alpha(z):+\partial \alpha(z)).
\end{align}
It is known \cite{KaVa1993,KRR2013,Li2011} that
\begin{align}\label{equ1}
\mathrm{Res}_{z}(b(z)\otimes c(z))(\tau\otimes \tau)=0
\end{align}
gives the Hirota bilinear equation associated to the KP 
hierarchy. The identification
between $\mathcal{F}^{l}$ and $e^{-l\alpha}\mathbb{C}[t_1, t_2, \cdots]$ \cite{KRR2013,Jing1991} implies that the Schur polynomials are examples of the tau functions (\ref{equ1}). The tau functions are also related to Hurwitz numbers and 2-Toda hierarchies (cf. \cite{ZZ}). Certain twisted form of the KP hierarchy \cite{DJKM1981B} can also be formulated with help of the fermionic vertex operator
algebra and Schur Q-functions (cf. \cite{Jing1991, DLWY2009}). Further generalizations to Hall-Littlewood functions
are also known \cite{Jing1995, WW}.

\section{Charged free bosons}\label{sec1}
We first recall the charged free bosons\cite{KaVa1987,Li2011,Wang1997} and the boson-boson correspondence (FMS bosonization) \cite{FMS1985,Wang1997}. With the representation of the Heisenberg fields we can describe the bosonic KP hierarchy of PDEs and the embedding from the completion of the Fock space of charged free bosons to the completion $\mathcal{B}$ of a polynomial algebra (\ref{sp1}). We then obtain some tau functions in $\mathcal{B}$ and give a new proof of Borchardt's identity. 
\subsection{The relations}
The charged free bosons 
are defined by
\begin{align}
\varphi(z)=\sum_{i\in \mathbb{Z}}\varphi_{i}z^{-i-1},~~~~\varphi^{*}(z)=\sum_{i\in \mathbb{Z}}\varphi^{*}_{i}z^{-i},
\end{align}
with the commutation relations
\begin{align}\label{eq2}
[\varphi_{i},\varphi^{*}_{j}]=\delta_{i,-j},~~~~[\varphi_{i},\varphi_{j}]=[\varphi^{*}_{i},\varphi^{*}_{j}]=0.
\end{align}
The nontrivial  OPE relations are 
\begin{align}\label{sp3}
\varphi(z)\varphi^{*}(w)\sim \frac{1}{z-w},~~\varphi^{*}(z)\varphi(w)\sim -\frac{1}{z-w}.
\end{align}

Let $\mathcal{M}$ be the Fock space of the charged free bosons generated by the vacuum vector $|0\rangle$ satisfying
\begin{align}\label{eq4}
\varphi_{i}|0\rangle=\varphi^{*}_{i+1}|0\rangle=0,~~i\geq 0.
\end{align}
Then $\mathcal{M}$ has the following basis
\begin{align}\label{eq5}
\{\varphi^{n_{l}}_{-i_{l}}\dots\varphi^{n_{1}}_{-i_{1}}\varphi^{*m_{k}}_{-j_{k}}\dots\varphi^{*m_{1}}_{-j_{1}}|0\rangle|
i_{l}>\dots>i_{1}>0,j_{k}>\dots>j_{1}\geq0,n_{s}\geq 0,m_{s}\geq 0\}.
\end{align}
As a vector space, $\mathcal{M}\simeq\mathbb C[\varphi_{-i-1}, \varphi^*_{-i}|i\in\mathbb Z_+]|0\rangle$. We will also need the completion space
$\widetilde{\mathcal{M}}=\mathbb C[[\varphi_{-i-1}, \varphi^*_{-i}|i\in\mathbb Z_+]]|0\rangle$.

\begin{remark} Charged free bosons are usually called the $\beta$-$\gamma$ system, and can also be studied
in a symplectic fermionic vertex operator superalgebra \cite{CFM2014, ACJ2014}.
\end{remark}

 It is known \cite{FKRW1995,Wang1997} that $\mathcal{M}$ is also a module for the algebra $\mathcal{W}_{1+\infty,-1}$ under the action 
\begin{align}\label{e:c1}
J^{l}(z)=:\varphi(z)\partial^{l}\varphi^{*}(z):,
\end{align}
where $ J^{l}(z)=\sum_{k\in \mathbb{Z}}J^{l}_{k}z^{-k-l-1}$.
Therefore, $J^{0}(z)=\sum_{k\in \mathbb{Z}}J^{0}_{k}z^{-k-1}$ is a free bosonic field with commutation relations
\begin{align}\label{eq7}
[J^{0}_{i},J^{0}_{j}]=-i\delta_{i,-j},~~~~[J^{0}_{i},\varphi_{j}]=-\varphi_{i+j},~~~~[J^{0}_{i},\varphi^{*}_{j}]=\varphi^{*}_{i+j}.
\end{align}
The space $\mathcal{M}$ also decomposes itself as a sum of eigenspaces of $-J^{0}_{0}$ (charge decomposition): 
\begin{align*}
\mathcal{M}=\bigoplus_{l\in \mathbb{Z}}\mathcal{M}^{l},
\end{align*}
where $\mathcal{M}^{l}=\{x\in \mathcal{M}|J_0^0 x=-lx\}$.
Similarly,  $J^{1}(z)=\sum_{k\in \mathbb{Z}}J^{1}_{k}z^{-k-2}$ is a Virasoro field with the commutation relations
\begin{align}\label{eq8} [J^{1}_{i},J^{1}_{j}]=(i-j)J^{1}_{i+j}+\frac{i^{3}-i}{6}\delta_{i,-j},~~[J^{1}_{i},\varphi_{j}]=-j\varphi_{i+j}~~,[J^{1}_{i},\varphi^{*}_{j}]=-(i+j)\varphi^{*}_{i+j}.
\end{align}
Then we have the degree decomposition of $\mathcal{M}$ according to the eigenvalues of the operator $J^{1}_{0}$:
\begin{equation}\label{e:deg}
\mathcal{M}=\bigoplus_{n=0}^{\infty}\mathcal{M}_{n},
\end{equation}
where $M_n=\{x\in \mathcal{M}|J_0^1 x=nx\}$, i.e., it is the span of the vectors in the form \eqref{eq5} such that $n=i_1n_1+\cdots+i_1n_1+j_1m_1+\cdots j_km_k$.

Using the method in \cite{Ang2017}, we have the following result.
\begin{proposition}\label{pro1}
Let $\Omega_{U}=\sum_{i\in \mathbb{Z}}\varphi^{*}_{i}\otimes \varphi_{-i}=\mathrm{Res}_{z}\varphi^{*}(z)\otimes \varphi(z)$. If $\tau\in \mathcal{M}$ satisfies the Hirota equation
\begin{align}\label{eq9}
\Omega_{U}(\tau\otimes \tau)=0,
\end{align}
then $\tau=|0\rangle$ up to a constant.
\end{proposition}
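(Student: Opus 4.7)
The plan is to identify $\mathcal{M}$ with the polynomial ring $\mathbb{C}[a_i,b_j:i\geq 1,\,j\geq 0]$ via $a_i:=\varphi_{-i}$, $b_j:=\varphi^{*}_{-j}$ and $|0\rangle\leftrightarrow 1$. The annihilation operators act as $\varphi^{*}_i=-\partial_{a_i}$ for $i\geq 1$ and $\varphi_j=\partial_{b_j}$ for $j\geq 0$, so
\[
\Omega_{U}=-\sum_{i\geq 1}\partial_{a_i}\otimes a_i+\sum_{j\geq 0}b_j\otimes \partial_{b'_j},
\]
and the identity $\Omega_{U}(\tau\otimes\tau)=0$ becomes the polynomial functional equation
\[
-\sum_{i\geq 1}\partial_{a_i}\tau(a,b)\,a'_i\,\tau(a',b')+\sum_{j\geq 0}b_j\,\tau(a,b)\,\partial_{b'_j}\tau(a',b')=0
\]
in four sets of commuting variables.

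The first reduction uses the charge grading $\mathcal{M}=\bigoplus_l \mathcal{M}^l$: every summand of $\Omega_U$ sends $\mathcal{M}^l\otimes\mathcal{M}^{l'}$ into $\mathcal{M}^{l-1}\otimes\mathcal{M}^{l'+1}$, so distinct $(l,l')$ land in distinct target summands. Writing $\tau=\sum_l\tau^l$ the equation decouples into $\Omega_U(\tau^l\otimes\tau^{l'})=0$ for every $(l,l')$; in particular each $\tau^l$ is itself a tau function. Hence it suffices to show $\tau^l=0$ for $l\neq 0$ and $\tau^0\in\mathbb{C}|0\rangle$.

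Since $\Omega_U$ also preserves the total $a$-degree and $b$-degree (the total number of $a_i$-factors summed across both tensor factors, and likewise for the $b_j$), I further decompose $\tau^l=\sum_k \tau^l_{(k)}$ with $\tau^l_{(k)}$ of bidegree $(k+l,k)$ (using the $b$-degree index for $l\geq 0$, and the symmetric $a$-indexed decomposition for $l<0$). Extracting the bidegree slot $(l+k-1,k;\,l+k'+1,k')$ in the functional equation produces the recursion
\[
-\sum_i\partial_{a_i}\tau^l_{(k)}(a,b)\,a'_i\,\tau^l_{(k')}(a',b')+\sum_j b_j\,\tau^l_{(k-1)}(a,b)\,\partial_{b'_j}\tau^l_{(k'+1)}(a',b')=0.
\]
For $l>0$, substituting $b=0$ in the original identity forces $\tau^l(a,0)$ to be constant, hence zero since its monomials have $a$-degree $l>0$; this is the base case $\tau^l_{(0)}=0$. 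Setting $k=k'$ in the recursion and applying the inductive hypothesis $\tau^l_{(k-1)}=0$ reduces it to $\bigl(\sum_i a'_i\partial_{a_i}\tau^l_{(k)}\bigr)\tau^l_{(k)}(a',b')=0$, which by the integral-domain property of polynomial rings forces $\tau^l_{(k)}=0$: since $\tau^l_{(k)}$ carries positive $a$-degree, not all $\partial_{a_i}\tau^l_{(k)}$ can vanish unless $\tau^l_{(k)}=0$. The case $l<0$ is completely parallel by exchanging the roles of $a$ and $b$.

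The decisive case is $l=0$. Let $\alpha:=\tau^0(0,0)$. If $\alpha=0$, the integral-domain argument from the $l>0$ case applies to the lowest nonzero bidegree and gives $\tau^0=0$. If $\alpha\neq 0$, the slot recursion with $(k,0)$ yields the explicit relation $\partial_{a_i}\tau^0_{(k)}=\alpha^{-1}\bigl(\sum_j c_{ij}b_j\bigr)\tau^0_{(k-1)}$, where $\tau^0_{(1)}=\sum_{i,j}c_{ij}a_ib_j$. Iterating from $\tau^0_{(0)}=\alpha$ produces $\tau^0_{(k)}=\frac{1}{k!\,\alpha^{k-1}}(\tau^0_{(1)})^k$ and hence formally $\tau^0=\alpha\exp(\tau^0_{(1)}/\alpha)$. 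The main obstacle is the polynomial-finiteness step at the end: because $\tau^0$ lies in $\mathcal{M}$ and not in the completion $\widetilde{\mathcal{M}}$, the exponential of a nonzero polynomial of positive degree cannot itself be a polynomial, so $\tau^0_{(1)}=0$ and $\tau^0=\alpha|0\rangle$, completing the proof.
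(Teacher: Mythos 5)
Your argument is correct, but it takes a genuinely different route from the paper's. The paper uses an extremal (leading-variable) device: choosing $N>0$ maximal with $\varphi_{-N}$ occurring in $\tau$ and writing $\tau=\sum_{k}\varphi_{-N}^{k}P_{k}|0\rangle$, the single summand $\varphi^{*}_{N}\otimes\varphi_{-N}$ of $\Omega_{U}$ produces a component proportional to $\varphi_{-N}^{m-1}P_{m}|0\rangle\otimes\varphi_{-N}^{m+1}P_{m}|0\rangle$, which nothing can cancel because every other contribution carries at most $\varphi_{-N}^{m}$ in the second tensor factor; this is short, but it tacitly assumes some $\varphi_{-N}$ with $N>0$ occurs, leaving the case of a nonconstant $\tau$ in the $\varphi^{*}_{-j}$ alone to an unstated symmetric argument. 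You instead exploit the two gradings systematically: the charge decomposition decouples the Hirota equation into the family $\Omega_{U}(\tau^{l}\otimes\tau^{l'})=0$, and the conserved $(a,b)$-bidegree splits each diagonal equation into slots, from which an induction with integral-domain factorizations kills every $\tau^{l}$ with $l\neq 0$ and yields $\tau^{0}_{(k)}=\frac{1}{k!\,\alpha^{k-1}}\bigl(\tau^{0}_{(1)}\bigr)^{k}$, so that $\tau^{0}=\alpha\exp\bigl(\tau^{0}_{(1)}/\alpha\bigr)$ can be a polynomial only if $\tau^{0}_{(1)}=0$. Your version is longer but buys more: it treats the pure-$\varphi$ and pure-$\varphi^{*}$ cases on an equal footing, and the $l=0$ analysis exhibits exactly the exponential shape $\exp\bigl(\sum c_{i,j}\varphi^{*}_{-i}\varphi_{-j}\bigr)|0\rangle$ of the tau functions living in the completion $\widetilde{\mathcal{M}}$ (Proposition \ref{pro2}), thereby explaining structurally why only the vacuum survives inside $\mathcal{M}$ itself.
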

\begin{proof}
It follows from definition (\ref{eq4}) that the vacuum vector $|0\rangle$ is a solution of (\ref{eq9}).
Suppose $\tau\neq |0\rangle$ is a solution of \eqref{eq9} and a sum of monomials in the form (\ref{eq5}). Let $N>0$ be the largest integer such that $\varphi_{-N}$ appears in $\tau$, then $\tau$ can be written in the form
\begin{align*}
\sum^{m}_{k=0}\varphi^{k}_{-N}P_{k}(\varphi_{-N+1},\dots,\varphi_{-1};\dots\varphi^{*}_{-2},\varphi^{*}_{-1},\varphi^{*}_{0})|0\rangle
\end{align*}
where $P_m\neq 0$  $(m\geq 1)$ and $P_k$ ($k\geq 0$) are linear combinations of the basis elements \eqref{eq5} such that
the largest $n$ for which $\varphi_{-n}$ appears in $P_k$ is $\leq N-1$. Then we have
\begin{align*}
\Omega_{U}(\tau\otimes \tau) 
=&\varphi^{*}_{N}\otimes \varphi_{-N}(\tau\otimes \tau)+\sum_{i<N}\varphi^{*}_{i}\otimes \varphi_{-i}(\tau\otimes \tau)\\ =&\sum^{m}_{k=0}-k\varphi^{*k-1}_{-N}P_{k}|0\rangle\otimes \sum^{m}_{k=0}\varphi^{k+1}_{-N}P_{k}|0\rangle+\sum_{i<N}\varphi^{*}_{i}\otimes \varphi_{-i}(\tau\otimes \tau).
\end{align*}
Note that the second summand $\sum_{i<N}\varphi^{*}_{i}\otimes \varphi_{-i}(\tau\otimes \tau)$ contains at most
$\varphi^{m}_{-N}$ in the right of the tensor products, there are no other terms to cancel the nonzero term $\varphi^{m-1}_{-N}P_{m}|0\rangle\otimes m\varphi^{m+1}_{-N}P_{m}|0\rangle$. The contradiction shows that $\Omega_U(\tau\otimes \tau)\neq 0$.
\end{proof}
Solutions of the Hirota equation are called {\it tau functions}. By the above proposition one needs to
focus on tau functions in the completion $\widetilde{\mathcal{M}}=\mathbb{C}[[\varphi_{-i-1},\varphi^{*}_{-i}]]_{i\geq 0}|0\rangle$ (see \eqref{eq5}).
\begin{proposition}\label{pro2} The function
$\tau= \exp\left(\sum_{i\geq 0,j>0}c_{i,j}\varphi^{*}_{-i}\varphi_{-j}\right)\cdot |0\rangle\in \widetilde{\mathcal{M}}$ (finitely many $c_{i,j}\neq 0$) is a solution of (\ref{eq9}).
\end{proposition}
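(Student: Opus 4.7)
My plan is to follow the standard strategy for showing that an exponential of a bilinear operator produces a tau function: reduce $\Omega_{U}(\tau\otimes\tau)=0$ to the vacuum identity $\Omega_{U}(|0\rangle\otimes|0\rangle)=0$ by showing that $\Omega_{U}$ commutes with an appropriate coproduct-like lift of the exponent.

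First I would set $A=\sum_{i\geq 0,\,j>0}c_{i,j}\,\varphi^{*}_{-i}\varphi_{-j}$ and note that because $i\geq 0$ and $-j<0$ cannot be negatives of each other, the monomials $\varphi^{*}_{-i}\varphi_{-j}$ are well defined and pairwise commute by \eqref{eq2}; in particular $[A\otimes 1,\,1\otimes A]=0$, so
\[
\tau\otimes\tau \;=\; e^{A}|0\rangle\otimes e^{A}|0\rangle \;=\; e^{A\otimes 1+1\otimes A}(|0\rangle\otimes|0\rangle),
\]
where the exponential makes sense in $\widetilde{\mathcal{M}}\otimes\widetilde{\mathcal{M}}$ since only finitely many $c_{i,j}$ are nonzero.

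Next, the key step is to verify
\[
[\Omega_{U},\,A\otimes 1+1\otimes A]=0.
\]
Using $[\varphi_{i},\varphi^{*}_{j}]=\delta_{i,-j}$ I compute the elementary commutators
\[
[\varphi^{*}_{p},A]=-\sum_{k\geq 0}c_{k,p}\,\varphi^{*}_{-k}\quad(p>0),\qquad [\varphi_{-p},A]=\sum_{l>0}c_{-p,l}\,\varphi_{-l}\quad(p\leq 0),
\]
both zero otherwise. Then
\[
[\Omega_{U},A\otimes 1]=\sum_{p\in\mathbb{Z}}[\varphi^{*}_{p},A]\otimes\varphi_{-p}=-\sum_{k\geq 0,\,p>0}c_{k,p}\,\varphi^{*}_{-k}\otimes\varphi_{-p},
\]
and after the reindexing $p\mapsto -p$,
\[
[\Omega_{U},1\otimes A]=\sum_{p\in\mathbb{Z}}\varphi^{*}_{p}\otimes[\varphi_{-p},A]=\sum_{k\geq 0,\,l>0}c_{k,l}\,\varphi^{*}_{-k}\otimes\varphi_{-l},
\]
and the two expressions cancel. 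This bookkeeping with the index ranges (respecting $i\geq 0$, $j>0$ in $A$ versus $i\in\mathbb{Z}$ in $\Omega_{U}$) is the only delicate point; everything else is mechanical.

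Finally, from the vacuum relations \eqref{eq4}, for every $i\in\mathbb{Z}$ at least one of $\varphi^{*}_{i}|0\rangle$ or $\varphi_{-i}|0\rangle$ vanishes (namely $\varphi^{*}_{i}|0\rangle=0$ for $i\geq 1$ and $\varphi_{-i}|0\rangle=0$ for $i\leq 0$), so $\Omega_{U}(|0\rangle\otimes|0\rangle)=0$. Combining these three ingredients,
\[
\Omega_{U}(\tau\otimes\tau)=\Omega_{U}\,e^{A\otimes 1+1\otimes A}(|0\rangle\otimes|0\rangle)=e^{A\otimes 1+1\otimes A}\,\Omega_{U}(|0\rangle\otimes|0\rangle)=0,
\]
which completes the proof. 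The hardest (and essentially the only nontrivial) part will be the index-careful verification of the commutator $[\Omega_{U},A\otimes 1+1\otimes A]=0$; the rest is formal manipulation.
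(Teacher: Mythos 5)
Your proposal is correct and follows essentially the same route as the paper: show that $\Omega_{U}$ commutes with $A\otimes 1+1\otimes A$ (the paper does this monomial by monomial, obtaining $[\Omega_{U},\,\varphi^{*}_{m}\varphi_{n}\otimes 1+1\otimes\varphi^{*}_{m}\varphi_{n}]=-\varphi^{*}_{m}\otimes\varphi_{n}+\varphi^{*}_{m}\otimes\varphi_{n}=0$, which is exactly your term-by-term cancellation) and then push $\Omega_{U}$ through the exponential onto $\Omega_{U}(|0\rangle\otimes|0\rangle)=0$. Your explicit checks that $[A\otimes 1,1\otimes A]=0$ and that the vacuum annihilation conditions kill every term of $\Omega_{U}(|0\rangle\otimes|0\rangle)$ are details the paper leaves implicit, but the argument is the same.
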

\begin{proof} It follows from (\ref{eq2}) that
\begin{align*}
[\Omega_{U},1\otimes \varphi^{*}_{m}\varphi_{n}&+\varphi^{*}_{m}\varphi_{n}\otimes 1]
=\sum_{i\in \mathbb{Z}}[\varphi^{*}_{i},\varphi^{*}_{m}\varphi_{n}]\otimes \varphi_{-i}+\sum_{i\in \mathbb{Z}}\varphi^{*}_{i}\otimes [\varphi_{-i},\varphi^{*}_{m}\varphi_{n}]\\
=&\sum_{i\in \mathbb{Z}}([\varphi^{*}_{i},\varphi^{*}_{m}]\varphi_{n}+\varphi^{*}_{m}[\varphi^{*}_{i},\varphi_{n}])\otimes \varphi_{-i}
+\sum_{i\in \mathbb{Z}}\varphi^{*}_{i}\otimes ([\varphi_{-i},\varphi^{*}_{m}]\varphi_{n}+\varphi^{*}_{m}[\varphi_{-i},\varphi_{n}])\\
=&-\varphi^{*}_{m}\otimes \varphi_{n}+\varphi^{*}_{m}\otimes \varphi_{n}=0.
\end{align*}
Then we have
\begin{align*}
\Omega_{U}(\tau\otimes \tau)=&\exp\left(\sum_{i\geq 0,j>0}c_{i,j}\varphi^{*}_{-i}\varphi_{-j}\right)\otimes \exp\left(\sum_{i\geq 0,j>0}c_{i,j}\varphi^{*}_{-i}\varphi_{-j}\right)(\Omega_{U}(|0\rangle\otimes |0\rangle))\\
=&0.
\end{align*}
\end{proof}

\begin{remark} The assignment $E_{ij}\longrightarrow :\varphi_i\varphi^*_j:$ also provides a level $-1$ representation of the infinite-dimensional
Lie algebra $\widehat{\mathfrak{gl}}(\infty)$. Therefore the bosonic tau functions are in the orbit of $\widetilde{\mathrm{GL}}(\infty)|0\rangle$.
\end{remark}

\subsection{Bosonization }
The {\it Friedan-Martinec-Shenker (FMS) bosonization} \cite{FF1992,Wang1997, FMS1985}
provides a boson-boson realization of the charged free bosons $\varphi(z),~\varphi^{*}(z)$ in terms of a lattice vertex algebra. Let
$L=\mathbb Z\alpha+\mathbb Z\beta$ be the 2-dimensional lattice, where
$
(\alpha\mid\beta)=0,~~~~ (\alpha\mid\alpha)=-(\beta\mid\beta)=1.
$
On the lattice vertex algebra associated to $L$ consider the vertex operators 
\begin{align}\label{e:B4}
\varphi(z)=Y(e^{-\alpha-\beta},z), \qquad 
\varphi^{*}(z)=Y(\alpha_{-1}e^{\alpha+\beta},z), 
\end{align}
and the associated 2-cocycle is
$\varepsilon(\alpha,\alpha)=\varepsilon(\alpha,\beta)=-\varepsilon(\beta,\alpha)=-\varepsilon(\beta,\beta)=1.
$
Using \eqref{ch2-1}, the above can be rewritten as
\begin{align}\label{ch3-1}
\varphi(z)=c(z)Y(e^{-\beta},z),~~~~\varphi^{*}(z)=\partial b(z)Y(e^{\beta},z).
\end{align}
It is easy to see that in this case (cf. \eqref{e:c1}) 
\begin{align}
\label{e:c2}J^{0}(z)&=-\beta(z),\\
\label{eq10}J^{1}(z)&=\frac{:\alpha(z)\alpha(z):+\partial \alpha(z)}{2}-\frac{:\beta(z)\beta(z):+\partial \beta(z)}{2}.
\end{align}
\par Introduce the (completed) bosonic Fock space
\begin{align}\label{sp1}
\mathcal{B}=\mathbb{C}[[x,y;p,p^{-1}]],
\end{align}
where $x=(x_{1},x_{2},x_{3},\dots),~y=(y_{1},y_{2},y_{3},\dots),~p=e^{\alpha+\beta}$ and $x_n=\frac1n\alpha_{-n}, y_n=\frac1n\beta_{-n}$.
Since
\begin{align*}
 [\alpha_m,\alpha_n]=m\delta_{m,-n},~~~~ [\beta_m,\beta_n]=-m\delta_{m,-n},~~~~[\alpha_0,e^{\alpha+\beta}]=e^{\alpha+\beta},~~~~ [\beta_0,e^{\alpha+\beta}]=-e^{\alpha+\beta},
\end{align*}
the Heisenberg fields $\alpha(z)=\sum_{n\in \mathbb{Z}}\alpha_nz^{-n-1}$, $\beta(z)=\sum_{n\in \mathbb{Z}}\beta_nz^{-n-1}$ act on $\mathcal{B}$ as follows $(n>0)$ :
\begin{align*}
&\alpha_n=\partial x_n,~~~~\alpha_{-n}=nx_n,~~~~\alpha_0=p\partial p,\\
&\beta_n=-\partial y_n,~~~~\beta_{-n}=ny_n,~~~~\beta_0=-p\partial p.
\end{align*}

\par Then we have an embedding of $\widetilde{\mathcal{M}}$ into $\mathcal{B}$ by $|0\rangle\rightarrow 1$ and
\begin{align}\label{eq11}
\notag &\varphi^{*}(z)=p\exp\left(\sum_{n>0}\left(x_{n}+y_{n}\right)z^{n}\right)\left(\sum_{k>0}kx_{k}z^{k-1}+\sum_{k>0}\partial x_{k}z^{-k-1}+p\partial p z^{-1}\right)\\
&\qquad\qquad\qquad\qquad\qquad \exp\left(-\sum_{n>0}(\partial x_{n}-\partial y_{n})\frac {z^{-n}}{n}\right),\\
&\label{eq32}\varphi(z)=p^{-1}\exp\left(-\sum_{n>0}(x_{n}+y_{n})z^{n}\right)\exp\left(\sum_{n>0}(\partial x_{n}-\partial y_{n})\frac {z^{-n}}{n}\right).
\end{align}
Therefore one can write $\Omega_U=\mathrm{Res}_{z}\varphi^{*}(z)\otimes \varphi(z)$ as an operator on the space $\mathcal B\otimes \mathcal B$. For simplicity we denote $X\otimes Y=X'Y''$,
then
\begin{align}\notag
\Omega_{U}=&\mathrm{Res}_{z}p^{\prime}p^{\prime\prime-1}\exp\left(\sum_{n>0}(x^{\prime}_{n}-x^{\prime\prime}_{n}+y^{\prime}_{n}-y^{\prime\prime}_{n})z^{n}\right)
\left(\sum_{k>0}kx^{\prime}_{k}z^{k-1}+\sum_{k>0}\partial x^{\prime}_{k}z^{-k-1}+p^{\prime}\partial p^{\prime} z^{-1}\right)\\ \label{eq12}
&\exp\left(-\sum_{n>0}(\partial x^{\prime}_{n}-\partial x^{\prime\prime}_{n}-\partial y^{\prime}_{n}+\partial y^{\prime\prime}_{n})\frac{z^{-n}}{n}\right).
\end{align}
We introduce the following new operators over $\mathcal{B}\otimes \mathcal{B}$:
\begin{equation*}
 A=\frac{1}{2}(A^{\prime}-A^{\prime\prime}), \qquad \bar{A}=\frac{1}{2}(A^{\prime}+A^{\prime\prime}).
\end{equation*}
Then
\begin{equation}
 A^{\prime}=A+\bar{A}, \qquad A^{\prime\prime}=\bar{A}-A.
\end{equation}
This gives us
\begin{align*}
\Omega_{U}=&\mathrm{Res}_{z}p^{\prime}p^{\prime\prime-1}\exp\left(\sum_{n>0}(2x_{n}+2y_{n})z^{n}\right)\\
&\left(\sum_{k>0}k(x_{k}+\bar{x}_{k})z^{k-1}+\sum_{k>0}\frac{\partial x_{k}+\partial \bar{x}_{k}}{2}z^{-k-1}+p^{\prime}\partial p^{\prime} z^{-1}\right)
\exp\left(\sum_{n>0}(-\partial x_{n}+\partial y_{n})\frac {z^{-n}}{n}\right).
\end{align*}
\begin{proposition} Let $\tau$ be a tau function in
$\mathbb{C}[[x,y]]$. Then 
\begin{align}\label{eq14}
 \notag &\sum_{i,j\geq0}S_{i}\left(2x+2y\right)\left[\left(j-i\right)\left(x_{j-i}+\bar{x}_{j-i}\right)+\frac{\partial \lambda_{i-j}+\partial \bar{x}_{i-j}}{2}\right]
S_{j}\left(-\widetilde{\partial} \lambda+\widetilde{\partial} \mu\right)\\
&\qquad \exp\left(\sum_{l\geq1}(x_{l}\partial \lambda_{l}+y_{l}\partial \mu_{l})\right)
\tau\left(\bar{x}-\lambda,\bar{y}-\mu\right)\tau\left(\bar{x}+\lambda,\bar{y}+\mu\right)\mid_{\lambda=\mu=0}=0,
\end{align}
where $\widetilde{\partial }\lambda=\left(\partial \lambda_{1},\frac{\partial \lambda_{2}}{2},\frac{\partial \lambda_{3}}{3},\dots\right).$
\end{proposition}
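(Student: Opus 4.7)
The plan is to apply the rewritten operator $\Omega_{U}$ displayed just before the proposition to the product $\tau(\bar{x}+\lambda,\bar{y}+\mu)\tau(\bar{x}-\lambda,\bar{y}-\mu)$, identifying the symbols $x,y$ in that display with the half-differences $\lambda,\mu$. I would then expand the two exponentials via the generating identity \eqref{e:Schur} for elementary Schur polynomials, extract the residue in $z$, and finally cast the result into the asserted form by means of a Taylor shift.

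Since $\tau\in\mathbb{C}[[x,y]]$ carries zero charge, $\alpha_{0}=p\partial p$ annihilates $\tau$, so the $p'\partial p'\,z^{-1}$ summand in the middle of $\Omega_{U}$ drops out; the overall factor $p'(p'')^{-1}$ commutes with everything else and can be divided off on the zero-charge component. What remains to treat is $\mathrm{Res}_{z}\,E_{+}(z)\,M(z)\,E_{-}(z)\,\tau(\bar{x}+\lambda,\bar{y}+\mu)\tau(\bar{x}-\lambda,\bar{y}-\mu)=0$, where $E_{+}(z)=\exp(\sum_{n>0}(2\lambda_{n}+2\mu_{n})z^{n})$, $E_{-}(z)=\exp(-\sum_{n>0}(\partial_{\lambda_{n}}-\partial_{\mu_{n}})z^{-n}/n)$, and $M(z)=\sum_{k>0}k(\bar{x}_{k}+\lambda_{k})z^{k-1}+\sum_{k>0}\tfrac{1}{2}(\partial_{\bar{x}_{k}}+\partial_{\lambda_{k}})z^{-k-1}$.

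Expanding by \eqref{e:Schur} gives $E_{+}(z)=\sum_{i\geq 0}S_{i}(2\lambda+2\mu)z^{i}$ and $E_{-}(z)=\sum_{j\geq 0}S_{j}(-\widetilde{\partial}\lambda+\widetilde{\partial}\mu)z^{-j}$. Extracting the coefficient of $z^{-1}$ in the product $E_{+}(z)M(z)E_{-}(z)$ forces, for each pair $(i,j)$, the reading off of the coefficient of $z^{j-i-1}$ inside $M(z)$. This contributes $(j-i)(\bar{x}_{j-i}+\lambda_{j-i})$ when $j>i$, $\tfrac{1}{2}(\partial_{\bar{x}_{i-j}}+\partial_{\lambda_{i-j}})$ when $i>j$, and zero when $i=j$; the two surviving cases combine into the single bracketed coefficient of \eqref{eq14}, but with a multiplicative $\lambda_{j-i}$ in place of the $x_{j-i}$ appearing in the statement.

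Finally, to introduce the auxiliary variables $x,y$, I would apply the Taylor shift $\exp(\sum_{l\geq 1}(x_{l}\partial_{\lambda_{l}}+y_{l}\partial_{\mu_{l}}))$ to the tau-product and then evaluate $\lambda=\mu=0$. Since this shift commutes through the differential operators $\partial_{\lambda_{i-j}}$ inside the middle factor and the $\widetilde{\partial}\lambda$-derivatives inside $S_{j}$, those operators retain their $\lambda,\mu$-form; meanwhile, the evaluation $\lambda=\mu=0$ converts every multiplicative $\lambda_{n}$---in particular those inside $S_{i}(2\lambda+2\mu)$ and inside $\bar{x}_{j-i}+\lambda_{j-i}$---into $x_{n}$. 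Assembling these pieces reproduces \eqref{eq14}. The main obstacle will be the bookkeeping across three layers of variables---the original tensor-factor variables $(x',x'')$, the half-sum and half-difference $(\bar{x},\lambda)$, and the auxiliary $(x,y)$---together with the non-commutation between multiplicative $\lambda_{n}$ and differential $\partial_{\lambda_{n}}$ which has to be tracked carefully as they pass the Taylor shift.
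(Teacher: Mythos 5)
Your derivation is correct and is exactly the argument the paper intends: the proposition is stated without proof as the residue extraction from the rewritten $\Omega_{U}$ in sum/difference variables, combined with the standard Hirota--Taylor shift, and your normal-ordering observation (all multiplicative $\lambda_n,\mu_n$ already sit to the left of all $\partial\lambda_n,\partial\mu_n$ in $E_{+}ME_{-}$) disposes of the one subtlety you flagged. No gaps.
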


Let's write down some Hirota equations given by \eqref{eq14}. Denote
$\tilde{S}_{n}=S_{n}\left(-\widetilde{\partial }\lambda+\widetilde{\partial} \mu\right)$ and
consider the case where
\begin{align}
\notag x_{i}=\bar{x}_{i}=y_{i}=\bar{y}_{i}=0,~~~~\text{for}~~ i\geq 2.
\end{align}
Then the coefficient of $x_{1}$ in (\ref{eq14}) gives the equation
\begin{align}
\notag &\left(2\overline{x}_{1}\tilde{S}_{2}+\partial \lambda_{1}+\partial\overline{x}_{1}+\overline{x}_{1}\tilde{S}_{1}\partial \lambda_{1}+\tilde{S}_{1}\right)
\tau(\bar{x}-\lambda,\bar{y}-\mu)\tau(\bar{x}+\lambda,\bar{y}+\mu)\mid_{\lambda=\mu=0}=0.
\end{align}

Writing $\bar{x}_{1}=u,~\bar{y}_{1}=v$ and $g(u,v)=\log \tau$, the above gives the differential equation:
\begin{align}
\notag u(-g_{uv}+g_{vv})+g_{u}=0.
\end{align}
Similarly, set
$x_{i}=\bar{x}_{i}=y_{i}=\bar{y}_{i}=0,~~~~\text{for}~~  i\geq 3$
and $\bar{x}_{1}=u,~\bar{y}_{1}=v,~f(u,v)=\log \tau$. Then one has the following differential equation from (\ref{eq14}):
\begin{align}\label{e:hierarchy2}
f_{uu}-2f_{uv}+f_{vv}=0.
\end{align}
Let $t=u-v, s=u+v$, then $\partial_t=\frac12(\partial_u-{\partial_v})$ and
$\partial_s=\frac12({\partial_u}+{\partial_v})$. So the special bosonic
hierarchy equation \eqref{e:hierarchy2} is a harmonic equation:
\begin{equation}\label{e:har}
\tilde{f}_{tt}=0.
\end{equation}
for the function $f=f(u, v)=\tilde{f}(t, s)$. This surprising result will be studied further elsewhere.

\begin{remark}
If $\tau\in\mathbb{C}[[x]]$, we get the so-called $\beta$-reduction \cite{Li2011}
\begin{align*}
 \sum_{i,j\geq0}&S_{i}\left(2x\right)\left[\left(j-i\right)\left(x_{j-i}+\bar{x}_{j-i}\right)+\frac{\partial \lambda_{i-j}+\partial \bar{x}_{i-j}}{2}\right]
S_{j}\left(-\widetilde{\partial }\lambda\right)\\
&\exp\left(\sum_{l\geq1}x_{l}\partial \lambda_{l}\right)
\tau(\bar{x}-\lambda)\tau(\bar{x}+\lambda)\mid_{\lambda=0}=0.
\end{align*}
In particular, the tau function of the $\beta$-reduction 
is also a tau function of (\ref{eq14}),
The converse is not true in general.
\end{remark}

\subsection{Tau functions }
\par We now discuss the tau functions of (\ref{eq14}). First, we give new expressions for the tau functions $\exp\left(\sum^{k}_{j=1}a_{j}\varphi_{-j}\varphi^{*}_{0}\right)\cdot 1$, $\exp\left(a\varphi_{-j}\varphi^{*}_{-1}\right)\cdot 1,~j\geq 1$, then we give formulas for  $\exp\left(a\varphi_{-s}\varphi^{*}_{-t}\right)\cdot 1,~s\geq 1,~t\geq 2$  and $\exp\left(d\varphi_{-j}\varphi^{*}_{-k}\right)\exp\left(c\varphi_{-i}\varphi^{*}_{-l}\right)\cdot 1,~i,j\geq 1,~k\geq l\geq 0$. 
\par  To describe our results, we need the elementary Schur polynomials $S_k(x),~S_k(-x-y),~S_k(x+y)$, which are defined similarly as \eqref{e:Schur} by the following generating functions: 
\begin{align}
&\label{schur1}\sum_{k=0}^{\infty}S_{k}(-x)w^{k}=\exp\left(-\sum^{\infty}_{n=1}x_{i}w^{i}\right),\\
&\sum_{k=0}^{\infty}S_{k}(-x-y)w^{k}=\exp\left(-\sum^{\infty}_{n=1}\left(x_i+y_i\right)w^{i}\right),\\
&\sum_{k=0}^{\infty}S_{k}(x+y)w^{k}=\exp\left(\sum^{\infty}_{n=1}\left(x_i+y_i\right)w^{i}\right).
\end{align}
\begin{lemma}\label{le3}
One has for $n\geq 0$
\begin{align}\label{n2}
\varphi^{*n}_{0}\cdot 1=(-1)^{n}n!p^{n}S_{n}\left(-x\right).
\end{align}
\end{lemma}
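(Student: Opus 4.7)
The plan is to proceed by induction on $n$, using the explicit realization of $\varphi^{*}(z)$ from \eqref{eq11}. The base case $n=0$ is immediate since $\varphi^{*0}_{0}\cdot 1 = 1 = S_{0}(-x)$. For the inductive step, it suffices to establish the one-step identity
\begin{equation*}
\varphi^{*}_{0}\cdot\bigl(p^{n} S_{n}(-x)\bigr) = -(n+1)\, p^{n+1}\, S_{n+1}(-x),
\end{equation*}
since combining this with the inductive hypothesis yields $\varphi^{*(n+1)}_{0}\cdot 1 = (-1)^{n+1}(n+1)!\, p^{n+1}\,S_{n+1}(-x)$.

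To prove the one-step identity I would compute $\varphi^{*}(z)\cdot p^{n} S_{n}(-x)$ via \eqref{eq11} and read off the coefficient of $z^{0}$. First, since $\alpha_{0} = p\partial p$, the commutation $\alpha(z)\, p^{n} = p^{n}(\alpha(z) + n z^{-1})$ pulls $p^{n}$ to the front and produces an overall factor $p^{n+1}$. Second, the rightmost exponential acts on $S_{n}(-x)$: the $\partial y$-part is trivial and the $\partial x$-part shifts $x_{k}\mapsto x_{k} - z^{-k}/k$; feeding this shift into the generating function \eqref{schur1} and using $\frac{1}{1-w/z}=\sum_{m\ge 0}(w/z)^{m}$ gives $\sum_{l=0}^{n} S_{n-l}(-x)\,z^{-l}$. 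Third, apply $\alpha(z) + n z^{-1}$ to this Laurent polynomial, using $\partial x_{k} S_{m}(-x) = -S_{m-k}(-x)$ (by differentiating \eqref{schur1} in $x_{k}$) to evaluate the $\partial x_{k}$ modes, while $p\partial p$ contributes nothing since the expression is $p$-independent. Finally, multiply on the left by $\exp\bigl(\sum_{n>0}(x_{n}+y_{n})z^{n}\bigr) = \sum_{m\ge 0} S_{m}(x+y) z^{m}$, which carries only non-negative powers of $z$, and extract the $z^{0}$-coefficient; only the non-positive powers of the preceding Laurent expression can contribute.

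The heart of the calculation is the Schur identity
\begin{equation*}
\sum_{k=1}^{m} k x_{k}\, S_{m-k}(-x) = -m\, S_{m}(-x),
\end{equation*}
which follows by applying $\partial_{w}$ to the generating function in \eqref{schur1} and comparing coefficients of $w^{m-1}$. Careful bookkeeping of the three sources of $z^{-i}$ contributions---from $\sum_{k>0}kx_{k} z^{k-1}$, from $nz^{-1}$, and from $\sum_{k>0}\partial x_{k} z^{-k-1}$---shows that for every $i\ge 1$ they cancel identically by this identity, while the $z^{0}$-coefficient collapses to $-(n+1)\,S_{n+1}(-x)$. The main obstacle I anticipate is precisely this index bookkeeping: one must correctly line up the three indexed sums and apply the Schur identity at the right place to see the cancellations. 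Once this is in place, the one-step identity drops out and the induction closes.
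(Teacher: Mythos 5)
Your proof is correct and follows essentially the same route as the paper: induction on $n$ using the bosonized expression \eqref{eq11}, the shift action of the rightmost exponential on $S_{n}(-x)$, and the identity $\sum_{k\geq 1}kx_{k}S_{m-k}(-x)=-mS_{m}(-x)$. The one step you handle differently is the contribution of the negative powers of $z$ (which would pair with $S_{m}(x+y)$, $m\geq 1$, from the left exponential): you check directly that their coefficients cancel via the same Schur identity --- which they do --- whereas the paper shortcuts this by observing that $\beta_{i}(\varphi^{*n+1}_{0}\cdot 1)=0$ for $i\geq 1$, so no $S_{k}(x+y)$ with $k\geq 1$ can appear in the answer.
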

\begin{proof} In terms of the variables $x_i$, $y_i$ and $p$, the field operator $\varphi^*(z)$ can be rewritten as (see \eqref{eq11})
\begin{align}\label{eq30}
\varphi^{*}(z)&=p\exp\left(\sum_{i>0}\left(x_{i}+y_{i}\right)z^{i}\right)\left(\sum_{k>0}kx_{k}z^{k-1}+\sum_{k>0}\partial x_{k}z^{-k-1}+p\partial p z^{-1}\right)\\ \label{eqABC}
\notag&\qquad\qquad\qquad\qquad\qquad \exp\left(-\sum_{i>0}(\partial x_{i}-\partial y_{i})\frac {z^{-i}}{i}\right)\\
&=p\exp\left(\sum_{i>0}\left(x_{i}+y_{i}\right)z^{i}\right)(A+B+C),
\end{align}
where we have used $A, B, C$ to denote the respective summands in the second and third factors, e.g.
$A=\sum_{k> 0}kx_{k}z^{k-1}\exp\left(-\sum_{i>0}(\partial x_{i}-\partial y_{i})\frac {z^{-i}}{i}\right)$, etc.

To show \eqref{n2} by induction on $n$, we note that it clearly holds for $n=0,1$. Now suppose \eqref{n2} is valid for $n$, and we want to show it for $n+1$. To this end, set $R=(-1)^{n}n!p^{n}S_{n}\left(-x\right)$ and it follows from induction hypothesis that
\begin{align}\label{eq31}
\varphi^{*}(z)\varphi^{*n}_{0}\cdot 1=p\exp\left(\sum_{i>0}\left(x_{i}+y_{i}\right)z^{i}\right)\left(A+B+C\right)R.
\end{align}
Applying $\exp\left(-\sum_{i\geq 1}(\partial x_{i}-\partial y_{i})\frac{z^{-i}}{i}\right)$ and $\partial_w$ to the series $\exp\left(-\sum_{j\geq 1} x_{j}w^{j}\right)$ respectively, we have that
\begin{align*}
&\exp\left(-\sum_{i\geq 1}(\partial x_{i}-\partial y_{i})\frac{z^{-i}}{i}\right)\exp\left(-\sum_{j\geq 1} x_{j}w^{j}\right)=\frac{1}{1-\frac{w}{z}}\exp\left(-\sum_{j\geq 1} x_{j}w^{j}\right),\\
&\partial_{w}\exp\left(-\sum_{j\geq 1} x_{j}w^{j}\right)=-\sum_{j\geq 1}jx_{j}w^{j-1}\exp\left(-\sum_{j\geq 1} x_{j}w^{j}\right).
\end{align*}
Collecting the coefficients, we get that
\begin{align*}
&\exp\left(-\sum_{i\geq 1}(\partial x_{i}-\partial y_{i})\frac{z^{-i}}{i}\right)S_{n}(-x)=\sum^{n}_{i=0}S_{i}(-x)z^{-(n-i)},\\
&\sum_{i\geq 1}ix_{i}S_{n-i}(-x)=-nS_{n}(-x).
\end{align*}
Consequently the coefficient of $z^{0}$ in $(A+B+C)R$ is 
\begin{align*}  
e_0:=(-1)^{n} n!p^{n}\sum_{k> 0}kx_{k}S_{n+1-k}(-x)=-(n+1)(-1)^{n} n!p^{n}S_{n+1}(-x)=(-1)^{n+1} (n+1)!p^{n}S_{n+1}(-x).
\end{align*}
By (\ref{eq7}) and (\ref{eq10}) it follows that
\begin{align*}
\beta_{i}(\varphi^{*j}_{0}\cdot 1)=-j\varphi^{*j-1}_{0}\varphi^{*}_{i}\cdot 1=0, ~~i\geq 1,
\end{align*}
i.e., $S_k(x+y)$ does not appear in $\varphi^{*n+1}_{0}\cdot 1$ for $k\geq 1$. Note that $\varphi^{*n+1}_{0}\cdot 1$ is
the coefficient of $z^0$ in $\varphi^{*}(z)\varphi^{*n}_{0}\cdot 1$. Using \eqref{eq31} we have that
 \begin{align*}
 \varphi^{*n+1}_{0}\cdot 1=pS_{0}(x+y)e_{0}=(-1)^{n+1} (n+1)!p^{n+1}S_{n+1}(-x),
 \end{align*}
which completes the proof.
\end{proof}
The following result is clear.
\begin{lemma}\label{le2} One has that
\begin{align*}
\sum_{k\geq m}C^{m}_{k}t^{k-m}=\frac{1}{(1-t)^{m+1}}.
\end{align*}
\end{lemma}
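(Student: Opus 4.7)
The plan is to derive this standard binomial identity from the geometric series by either differentiation or induction, whichever reads more cleanly. I would start from the geometric series
\begin{align*}
\frac{1}{1-t}=\sum_{k\geq 0}t^{k},
\end{align*}
which is the $m=0$ case of the lemma. Differentiating both sides $m$ times with respect to $t$, the left-hand side becomes $\frac{m!}{(1-t)^{m+1}}$ and the right-hand side becomes $\sum_{k\geq m}\frac{k!}{(k-m)!}t^{k-m}$, since the first $m$ terms vanish. Dividing both sides by $m!$ and recognizing $C^{m}_{k}=\frac{k!}{m!(k-m)!}$ yields the claim.

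Alternatively I could proceed by induction on $m$: the base case is the geometric series above, and for the inductive step I would differentiate $\sum_{k\geq m}C^{m}_{k}t^{k-m}=(1-t)^{-m-1}$ term-by-term and use the Pascal-type identity $(k-m)C^{m}_{k}=(m+1)C^{m+1}_{k}$ to rewrite the resulting sum. There is no real obstacle here; the only thing to be careful about is the formal convergence of the series (which is automatic for $|t|<1$, or else one works in the ring of formal power series where differentiation of power series is defined term-by-term and the identity is a purely algebraic statement about coefficients).
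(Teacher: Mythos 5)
Your proof is correct: differentiating the geometric series $m$ times (or the equivalent induction with the identity $(k-m)C^{m}_{k}=(m+1)C^{m+1}_{k}$) is the standard derivation, and the notation $C^{m}_{k}=\binom{k}{m}$ matches the paper's usage. The paper itself states this lemma without proof (``The following result is clear''), so your argument simply supplies the routine verification that the authors omitted.
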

\begin{theorem}\label{th1} We have that
\begin{align}
\exp\left(\sum^{s}_{j=1}a_{j}\varphi_{-j}\varphi^{*}_{0}\right)\cdot 1=\frac{1}{1-\sum^{s}_{j=1}a_{j}S_j(-x-y)}\exp\left(-\sum_{n\geq 1}x_{n}(\frac{-\sum^{s}_{j=1}a_{j}S_{j-1}(-x-y)}{1-\sum^{s}_{j=1}a_{j}S_j(-x-y)})^{n}\right).
\end{align}
\end{theorem}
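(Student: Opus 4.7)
The plan is to expand $e^T\cdot 1$ as $\sum_{n\geq 0}T^n\cdot 1/n!$ with $T=\sum_{j=1}^{s}a_j\varphi_{-j}\varphi^*_0$, evaluate each $T^n\cdot 1$ bosonically using the FMS bosonization \eqref{eq32} and Lemma \ref{le3}, and then resum using Lemma \ref{le2}. The first reduction is algebraic: since $[\varphi_{-j},\varphi^*_0]=\delta_{-j,0}=0$ for $j\geq 1$ by \eqref{eq2}, all the operators $\varphi_{-j}\varphi^*_0$ commute pairwise, so
\begin{equation*}
T^n=\Bigl(\sum_{j=1}^{s}a_j\varphi_{-j}\Bigr)^n(\varphi^*_0)^n,\qquad T^n\cdot 1=(-1)^n n!\,\Bigl(\sum_{j}a_j\varphi_{-j}\Bigr)^n p^n S_n(-x)
\end{equation*}
by Lemma \ref{le3}. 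Writing $\hat A(z)=\sum_{j=1}^{s}a_jz^{-j}$ and using $[\varphi(z),\varphi(w)]=0$, I represent $\bigl(\sum_j a_j\varphi_{-j}\bigr)^n$ as the multi-residue $\mathrm{Res}_{z_1,\dots,z_n}\prod_i\hat A(z_i)\,\varphi(z_1)\cdots\varphi(z_n)$.

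Next I exploit the surprisingly clean bosonic form of $\varphi(z_1)\cdots\varphi(z_n)$. Decomposing $\varphi(z)=p^{-1}E_+(z)E_-(z)$ with $E_+(z)=\exp(-\sum_n(x_n+y_n)z^n)$ and $E_-(z)=\exp(\sum_n(\partial x_n-\partial y_n)z^{-n}/n)$, a direct commutator computation gives $[E_-(z_1),E_+(z_2)]=0$ because the cross terms $[\partial x_n,x_n]=[\partial y_n,y_n]=1$ cancel in pairs. Hence $\prod_i\varphi(z_i)=p^{-n}\prod_iE_+(z_i)\prod_iE_-(z_i)$, and the $p^{-n}$ cancels the $p^n$ from Lemma \ref{le3}. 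Since $\prod_iE_-(z_i)$ shifts $x_k\mapsto x_k+\sum_iz_i^{-k}/k$, the generating function \eqref{schur1} gives
\begin{equation*}
\prod_iE_-(z_i)\,S_n(-x)=[w^n]\Bigl(\sum_l S_l(-x)w^l\Bigr)\prod_i(1-w/z_i)=\sum_{l=0}^{n}(-1)^{n-l}S_l(-x)\,e_{n-l}(z_1^{-1},\dots,z_n^{-1}).
\end{equation*}

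The residue in $z_1,\dots,z_n$ of $\prod_i\hat A(z_i)E_+(z_i)$ against $e_{n-l}(z_1^{-1},\dots,z_n^{-1})$ factorizes because the integrand is a product in separate variables; expanding $e_{n-l}$ as a sum over subsets $I\subseteq\{1,\dots,n\}$ of size $n-l$ and using symmetry yields $\binom{n}{l}\beta^{n-l}\alpha^{l}$, where $\alpha=\mathrm{Res}_z\hat A(z)E_+(z)=\sum_ja_jS_{j-1}(-x-y)$ and $\beta=[z^0]\hat A(z)E_+(z)=\sum_ja_jS_j(-x-y)$. Combining everything,
\begin{equation*}
\frac{T^n\cdot 1}{n!}=\sum_{l=0}^{n}(-1)^{l}S_l(-x)\binom{n}{l}\beta^{n-l}\alpha^{l}.
\end{equation*}
Interchanging the order of summation in $e^T\cdot 1=\sum_n\tfrac{T^n\cdot 1}{n!}$ and applying Lemma \ref{le2} to $\sum_{n\geq l}\binom{n}{l}\beta^{n-l}=(1-\beta)^{-l-1}$ gives
\begin{equation*}
e^T\cdot 1=\frac{1}{1-\beta}\sum_{l\geq 0}S_l(-x)\Bigl(\frac{-\alpha}{1-\beta}\Bigr)^{l}=\frac{1}{1-\beta}\exp\Bigl(-\sum_{n\geq 1}x_n\Bigl(\frac{-\alpha}{1-\beta}\Bigr)^{n}\Bigr),
\end{equation*}
where the last equality uses \eqref{schur1} again with $u=-\alpha/(1-\beta)$. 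Substituting the expressions for $\alpha$ and $\beta$ yields exactly the claimed formula.

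The main obstacle is the bookkeeping in Step 3--4: one has to establish the clean factorization $\prod_i\varphi(z_i)=p^{-n}\prod_iE_+(z_i)\prod_iE_-(z_i)$ (which relies on the cancellation in $[E_-(z_1),E_+(z_2)]$), correctly compute the shift of $S_n(-x)$ under $\prod_iE_-(z_i)$, and then extract the binomial $\binom{n}{l}$ from the symmetric multi-residue with the right sign. Once this combinatorial identity is in place, Lemma \ref{le2} immediately collapses the double sum into the advertised exponential/geometric form.
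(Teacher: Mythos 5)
Your proof is correct, and its skeleton coincides with the paper's: both rest on Lemma \ref{le3} for $\varphi^{*n}_{0}\cdot 1$, on the vanishing of the normal-ordering correction between the annihilation and creation halves of $\varphi(z)$ (your $[E_-(z_1),E_+(z_2)]=0$, which is the paper's observation that $\varphi_{-j}$ commutes with $S_n(-x-y)$), and on Lemma \ref{le2} to collapse $\sum_{n\geq l}\binom{n}{l}\beta^{n-l}$ into $(1-\beta)^{-l-1}$. Where you differ is the intermediate bookkeeping: the paper computes the closed form of $\prod_{j}\exp(a_j\varphi_{-j})\exp(a\varphi^*_0)\cdot 1$ and then extracts the coefficient of $a^kp^0$ to isolate $\frac{1}{k!}\bigl(\sum_j a_j\varphi_{-j}\varphi^*_0\bigr)^k\cdot 1$, whereas you evaluate $T^n\cdot 1$ head-on as a multi-residue of the $n$-point function $\prod_i\varphi(z_i)$ applied to $p^nS_n(-x)$, with the elementary symmetric polynomial $e_{n-l}(z_1^{-1},\dots,z_n^{-1})$ producing the binomial coefficient $\binom{n}{l}\beta^{n-l}\alpha^l$. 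The two devices are combinatorially equivalent (choosing the subset $I$ of residue-contributing variables is the same as tracking the $p$-degree), but your version avoids the auxiliary parameters $a,p$ and makes the role of the commuting factorization $\prod_i\varphi(z_i)=p^{-n}\prod_iE_+(z_i)\prod_iE_-(z_i)$ explicit, which is arguably cleaner; the paper's version has the minor advantage of producing the exponential generating identity \eqref{eq29} as a reusable intermediate. One small point worth stating explicitly in your write-up: the final substitution of $u=-\alpha/(1-\beta)$ into $\sum_lS_l(-x)u^l=\exp(-\sum_nx_nu^n)$ is legitimate because $\alpha$ and $\beta$ have no constant term in the $a_j$, so everything converges in the formal power series ring $\mathbb{C}[[x,y]][[a_1,\dots,a_s]]$.
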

\begin{proof}
First by \eqref{schur1} and \eqref{n2}, we see that
\begin{align*}
\exp\left(a\varphi^{*}_{0}\right)\cdot 1=\exp\left(-\sum_{n\geq 1}x_{n}(-ap)^{n}\right).
\end{align*}
Recalling \eqref{eq32}, we have that
\begin{align*}
\varphi(z)\exp\left(a\varphi^{*}_{0}\right)\cdot 1=&p^{-1}\exp\left(-\sum_{n>0}(x_{n}+y_{n})z^{n}\right)\exp\left(-\sum_{n\geq 1}\left(x_n+\frac{z^{-n}}{n}\right)(-ap)^{n}\right)\\
=&p^{-1}\exp\left(-\sum_{n>0}(x_{n}+y_{n})z^{n}\right)(1-apz^{-1})\exp\left(-\sum_{n\geq 1}x_{n}(-ap)^{n}\right).
\end{align*}
Thus
\begin{align*}
\varphi_{-j}\exp\left(a\varphi^{*}_{0}\right)\cdot 1=\left(p^{-1}S_{j-1}(-x-y)-aS_j(-x-y)\right)\exp\left(-\sum_{n\geq 1}x_{n}(-ap)^{n}\right).
\end{align*}
Since $\varphi(z)$ commutes with $\exp\left(-\sum^{\infty}_{n=1}\left(x_{i}+y_{i}\right)w^{i}\right)$,
i.e., its components $\varphi_j$ commute with $S_{n}(-x-y)$ for all integral $j$. 
Therefore
\begin{align*}
\varphi^m_{-j}\exp\left(a\varphi^{*}_{0}\right)\cdot 1=\left(p^{-1}S_{j-1}(-x-y)-aS_j(-x-y)\right)^m\exp\left(-\sum_{n\geq 1}x_{n}(-ap)^{n}\right).
\end{align*}
Then we have
\begin{align}\notag
&\prod^{s}_{j=1}\exp\left(a_{j}\varphi_{-j}\right)\exp\left(a\varphi^{*}_{0}\right)\cdot 1\\ \label{eq29}
=&\exp\left(p^{-1}\sum^{s}_{j=1}a_{j}S_{j-1}(-x-y)\right)\exp\left(a\sum^{s}_{j=1}a_{j}S_{j}(-x-y)\right)\exp\left(-\sum_{n\geq 1}x_{n}(-ap)^{n}\right).
\end{align}

Comparing the coefficient of $a^kp^0$ on both sides of (\ref{eq29}), we get
\begin{align*}
&\frac{1}{k!}\sum_{\substack{m_1,m_2,\dots,m_{s-1}\geq 0\\m_1+m_2+\dots+m_{s-1}\leq k}}\frac{(a_{s}\varphi_{-s}\varphi^{*}_{0})^{k-\sum^{s-1}_{j=1}m_{j}}}{(k-\sum^{s-1}_{j=1}m_{j})!}
\prod^{s-1}_{j=1}\frac{\left(a_{j}\varphi_{-j}\varphi^{*}_{0}\right)^{m_{j}}}{m_{j}!}\cdot 1\\
=&\sum_{0\leq m\leq k}\frac{\left(\sum^{s}_{j=1}a_{j}S_{j}(-x-y)\right)^{k-m}}{(k-m)!}\frac{\left(-\sum^{s}_{j=1}a_{j}S_{j-1}(-x-y)\right)^{m}}{m!}S_{m}(-x).
\end{align*}
Thus we have
\begin{align*}
&\sum_{k\geq 0}\sum_{\substack{m_1,m_2,\dots,m_{s-1}\geq 0\\m_1+m_2+\dots+m_{s-1}\leq k}}\frac{(a_{s}\varphi_{-s}\varphi^{*}_{0})^{k-\sum^{s-1}_{j=1}m_{j}}}{(k-\sum^{s-1}_{j=1}m_{j})!}
\prod^{s-1}_{j=1}\frac{(a_{j}\varphi_{-j}\varphi^{*}_{0})^{m_{j}}}{m_{j}!}\cdot 1\\
=&\sum_{k\geq 0}\sum_{0\leq m\leq k}C^{m}_{k}\left(\sum^{s}_{j=1}a_{j}S_{j}(-x-y)\right)^{k-m}\left(-\sum^{s}_{j=1}a_{j}S_{j-1}(-x-y)\right)^{m}S_{m}(-x)\\
=&\sum_{m\geq 0}\left(-\sum^{s}_{j=1}a_{j}S_{j-1}(-x-y)\right)^{m}S_{m}(-x)\sum_{k\geq m}C^{m}_{k}\left(\sum^{s}_{j=1}a_{j}S_{j}(-x-y)\right)^{k-m}\\
=&\sum_{m\geq 0}\left(-\sum^{s}_{j=1}a_{j}S_{j-1}(-x-y)\right)^{m}S_{m}(-x)\frac{1}{\left(1-\sum^{s}_{j=1}a_{j}S_{j}(-x-y)\right)^{m+1}}\\
=&\frac{1}{1-\sum^{s-1}_{j=1}a_{j}S_{j}(-x-y)}\exp\left(-\sum_{n\geq 1}x_{n}\left(\frac{-\sum^{s}_{j=1}a_{j}S_{j-1}(-x-y)}{1-\sum^{s}_{j=1}a_{j}S_{j}(-x-y)}\right)^{n}\right),
\end{align*}
where we have used Lemma \ref{le2}.
That is,
\begin{align*}
\exp\left(\sum^{s}_{j=1}a_{j}\varphi_{-j}\varphi^{*}_{0}\right)\cdot 1=\frac{1}{1-\sum^{s}_{j=1}a_{j}S_j(-x-y)}\exp\left(-\sum_{n\geq 1}x_{n}\left(\frac{-\sum^{s}_{j=1}a_{j}S_{j-1}(-x-y)}{1-\sum^{s}_{j=1}a_{j}S_j(-x-y)}\right)^{n}\right).
\end{align*}
\end{proof}
\begin{theorem}\label{th2} For $j\geq 1$, we have
\begin{align} \label{eq26}
&\exp(a\varphi_{-j}\varphi^{*}_{-1})\cdot 1=\frac{1}{1-aS_{j+1}(-x-y)-aS_{1}(x+y)S_{j}(-x-y)}\\
\notag \cdot&\exp\left(-\sum_{n\geq 1}\sum_{0\leq i\leq n}\frac{C^{i}_{n}S^{n-i}_{1}(x+y)(n+i)x_{n+i}}{n}\left(\frac{-aS_{j-1}(-x-y)}{1-aS_{j+1}(-x-y)-aS_{1}(x+y)S_{j}(-x-y)}\right)^n\right).
\end{align}
\end{theorem}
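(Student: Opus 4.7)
The plan is to mirror the proof of Theorem 3.1 with $\varphi^{*}_{-1}$ in place of $\varphi^{*}_{0}$. First, I would establish an analogue of Lemma 3.3 for $\varphi^{*}_{-1}$. Extracting the coefficient of $z^{1}$ in \eqref{eqABC} applied to $1$ (only $A\cdot 1$ contributes, since $B\cdot 1 = C\cdot 1 = 0$) gives the base case $\varphi^{*}_{-1}\cdot 1 = p(x_{1}S_{1}(x+y) + 2x_{2})$. Then, by induction on $n$ (analogous to Lemma 3.3), one shows
\begin{align*}
\exp(a\varphi^{*}_{-1})\cdot 1 = \exp\Bigl(-\sum_{n\geq 1}H_{n}(x,y)(-ap)^{n}\Bigr),
\end{align*}
where $H_{n}(x,y) := \sum_{0\leq i\leq n}\frac{C^{i}_{n}S_{1}(x+y)^{n-i}(n+i)x_{n+i}}{n}$. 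This is the combinatorial engine that will produce the double sum in the final formula.

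Next, apply $\varphi(z) = p^{-1}\exp(-\sum(x_{n}+y_{n})z^{n})\tilde{D}(z)$ (writing $\tilde D(z)$ for the $\exp(\sum(\partial x_{n}-\partial y_{n})z^{-n}/n)$ factor from \eqref{eq32}) to this closed form. Since $\tilde D(z)$ fixes $S_{1}(x+y)$ (because $\partial x_{1}-\partial y_{1}$ annihilates $x_{1}+y_{1}$) and shifts $x_{n+i}\mapsto x_{n+i}+z^{-(n+i)}/(n+i)$, the binomial theorem yields $\tilde D(z) H_{n} = H_{n} + (S_{1}(x+y)+z^{-1})^{n}/(nz^{n})$, and therefore
\begin{align*}
\tilde D(z)\exp\Bigl(-\sum_{n\geq 1} H_{n}(-ap)^{n}\Bigr) = \bigl(1 + ap(S_{1}(x+y)+z^{-1})/z\bigr)\exp\Bigl(-\sum_{n\geq 1} H_{n}(-ap)^{n}\Bigr)
\end{align*}
via $\exp(-\sum w^{n}/n) = 1-w$. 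Extracting the coefficient of $z^{j-1}$ then gives the intertwining
\begin{align*}
\varphi_{-j}\exp(a\varphi^{*}_{-1})\cdot 1 = (N_{j}+ad_{j})\exp(a\varphi^{*}_{-1})\cdot 1,
\end{align*}
where $N_{j} := p^{-1}S_{j-1}(-x-y)$ and $d_{j} := S_{j+1}(-x-y)+S_{1}(x+y)S_{j}(-x-y)$.

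Since $N_{j}$ and $d_{j}$ commute with $\varphi_{-j}$ (both are built from $x_{n}+y_{n}$ and $p$) and $[\varphi_{-j},\varphi^{*}_{-1}] = 0$ for $j\geq 1$, iterating gives $\varphi_{-j}^{m}\exp(a\varphi^{*}_{-1})\cdot 1 = (N_{j}+ad_{j})^{m}\exp(a\varphi^{*}_{-1})\cdot 1$ and therefore $\exp(a'\varphi_{-j})\exp(a\varphi^{*}_{-1})\cdot 1 = \exp(a'N_{j})\exp(a'ad_{j})\exp(a\varphi^{*}_{-1})\cdot 1$. Comparing the coefficient of $a^{k}a'^{k}p^{0}$ on both sides with $\exp(a\varphi_{-j}\varphi^{*}_{-1})\cdot 1 = \sum_{k}\tfrac{a^{k}}{k!}\varphi_{-j}^{k}\varphi^{*k}_{-1}\cdot 1$, and then using Lemma \ref{le2} to collapse $\sum_{k\geq q}C^{q}_{k}(ad_{j})^{k-q} = (1-ad_{j})^{-(q+1)}$, produces the claimed closed form with prefactor $1/(1-ad_{j})$ and exponent $-\sum_{n\geq 1} H_{n}(x,y) R^{n}$ where $R = -aS_{j-1}(-x-y)/(1-ad_{j})$, matching \eqref{eq26}.

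The main obstacle is Step 1: the base-case closed form of $\exp(a\varphi^{*}_{-1})\cdot 1$. Unlike Lemma \ref{le3} (where $\varphi^{*}_{0}\cdot 1 = px_{1}$ gives the clean single-variable generating function $\exp(-\sum x_{n}(-ap)^{n})$), here $\varphi^{*}_{-1}\cdot 1$ already couples $x$ and $y$ through $S_{1}(x+y)$, so the inductive step must carefully track how the shift $\tilde D(z)$ interlaces powers of $S_{1}(x+y)$ with higher $x_{n+i}$ to generate the binomial coefficients $C^{i}_{n}$. The binomial identity $\sum_{i}C^{i}_{n}S_{1}(x+y)^{n-i}z^{-i} = (S_{1}(x+y)+z^{-1})^{n}$ invoked in Step 2 provides a strong consistency check for the conjectured form of $H_{n}$.
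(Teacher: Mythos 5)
Your proposal is correct and follows essentially the same route as the paper's Appendix A: the closed form $\exp(a\varphi^{*}_{-1})\cdot 1=\sum_n(-ap)^nS^{*}_n$ with your $H_n$ as the exponent (proved by the same induction as Lemma \ref{le3}, using that $S_k(x+y)$ with $k\geq 2$ cannot appear), the intertwining $\varphi_{-j}\exp(a\varphi^{*}_{-1})\cdot 1=(N_j+ad_j)\exp(a\varphi^{*}_{-1})\cdot 1$ via the shift $\tilde D(z)$, and the final collapse by Lemma \ref{le2}. The only difference is cosmetic: you package the iteration as a two-parameter generating-function/coefficient-of-$a^ka'^kp^0$ argument (as in the paper's proof of Theorem \ref{th1}), whereas the appendix iterates the recursion for $\varphi_{-j}S^{*}_n$ directly to get $\varphi^{n}_{-j}\varphi^{*n}_{-1}\cdot 1$ in closed form.
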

The proof is left in Appendix A.
\begin{corollary} Suppose that
\begin{align}\label{n1}
\left(\sum^{t}_{m=0}S_{m}(x+y)z^{-t-1+m}\right)^{n}=\sum^{nt}_{i=0}f_{i}z^{-n(t+1)+i},
\end{align}
and let
\begin{align*}
p_{n}=\sum^{nt}_{i=0}f_{i}\left(n(t+1)-i\right)x_{n(t+1)-i}
\end{align*}
and
\begin{align*}
A_{s,t}&=\sum^{t}_{m=0}S_{m}(x+y)S_{s+t-m}(-x-y).
\end{align*}
Then we have
\begin{align*}
\exp\left(a\varphi_{-s}\varphi^{*}_{-t}\right)\cdot 1=\frac{1}{1-aA_{s,t}}\exp\left(-\sum_{n\geq 1}\frac{p_{n}}{n}\left(\frac{-aS_{s-1}(-x-y)}{1-aA_{s,t}}\right)^{n}\right),~~s\geq 1,~t\geq 0.
\end{align*}
\end{corollary}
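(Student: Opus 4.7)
The plan is to generalize the proof of Theorem~\ref{th2} (which handles $t=1$) to arbitrary $t\geq 0$, following the three-step strategy used there: compute $\exp(a\varphi^*_{-t})\cdot 1$ in closed form, act with powers of $\varphi_{-s}$, and resum with Lemma~\ref{le2}.

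First, observe that $[\varphi_{-s},\varphi^*_{-t}]=\delta_{-s,t}=0$ when $s\geq 1$ and $t\geq 0$, so the operators commute strictly and
\begin{equation*}
\exp(a\varphi_{-s}\varphi^*_{-t})\cdot 1 \;=\; \sum_{n\geq 0}\frac{a^n}{n!}\varphi^n_{-s}\varphi^{*n}_{-t}\cdot 1.
\end{equation*}
I would compute $\exp(a\varphi^*_{-t})\cdot 1$ by induction on $n$, generalizing Lemma~\ref{le3}. Using the bosonized form \eqref{eq30}, the base case gives $\varphi^*_{-t}\cdot 1=p\sum_{k=1}^{t+1}kx_k S_{t+1-k}(x+y)$, since only the creation part $A$ of $\varphi^*(z)$ contributes on $1$. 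The inductive step exploits two facts: the shift operator $\Gamma_-(z)$ in $\varphi^*(z)$ commutes with the multiplicative factor $\Gamma_+(z')$ and preserves every $S_m(x+y)$ (because $x_k+y_k$ is invariant under the shift); and the identity $\sum_{k\geq 1}kx_k S_{n-k}(-x)=-nS_n(-x)$ from Lemma~\ref{le3} continues to apply after keeping track of the $S_m(x+y)$ decoration. The outcome is a closed form for $\exp(a\varphi^*_{-t})\cdot 1$ whose exponential factor involves the Schur polynomials $S_m(x+y)$ and $S_m(-x-y)$ for $0\leq m\leq t$.

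Next, I apply $\varphi(z)$ from \eqref{eq32}. Its shift operator translates $x_k\mapsto x_k+z^{-k}/k$ and $y_k\mapsto y_k-z^{-k}/k$, preserving $S_k(-x-y)$ but producing a $z$-dependent multiplicative correction whose expansion contains precisely the series $\sum_{m=0}^t S_m(x+y)z^{-t-1+m}$ appearing in \eqref{n1}. Acting with $\varphi^n_{-s}$ amounts to taking the $n$-th power of this correction together with $\Gamma_+(z)^{-1}=\sum_{k\geq 0}S_k(-x-y)z^k$ and extracting the appropriate residue in $z$; the $n$-th power gives $\sum_i f_iz^{-n(t+1)+i}$, and an auxiliary differentiation in the exponential variable produces the weights $(n(t+1)-i)x_{n(t+1)-i}$ that assemble into $p_n$. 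The residual $z$-independent contribution yields the factor $S_{s-1}(-x-y)$ in conjunction with $A_{s,t}=\sum_{m=0}^{t}S_m(x+y)S_{s+t-m}(-x-y)$.

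Finally, I sum over $n$ using Lemma~\ref{le2}. Grouping contributions by powers of $-aS_{s-1}(-x-y)$ and applying $\sum_{k\geq m}C^m_k\xi^{k-m}=(1-\xi)^{-m-1}$ with $\xi=aA_{s,t}$ produces the prefactor $(1-aA_{s,t})^{-1}$ together with the exponential factor $\exp\bigl(-\sum_{n\geq 1}\tfrac{p_n}{n}\bigl(-aS_{s-1}(-x-y)/(1-aA_{s,t})\bigr)^n\bigr)$, exactly as in the proofs of Theorems~\ref{th1} and \ref{th2}. The main obstacle is the combinatorial bookkeeping at the $\varphi^n_{-s}$ step: one must show that the $z$-dependent factor contributed by the shift is precisely $\sum_{m=0}^t S_m(x+y)z^{-t-1+m}$ (not the full generating series $z^{-t-1}\Gamma_+(z)$), so that its $n$-th power has the coefficients $f_i$ of \eqref{n1}, and that the remaining pieces regroup cleanly into the $A_{s,t}$ factor. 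This verification parallels the Appendix A computation for $t=1$ and requires only the additional observation that the principal-part truncation is forced by the support constraint $S_m(-x-y)=0$ for $m<0$ when extracting the relevant residues.
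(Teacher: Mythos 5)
Your proposal follows exactly the route the paper intends for this corollary --- it is stated without a separate proof because it results from rerunning the Appendix~A argument for Theorem~\ref{th2} with $t=1$ replaced by general $t$, and each of your steps (commutativity of $\varphi_{-s}$ and $\varphi^{*}_{-t}$, the closed form $\varphi^{*n}_{-t}\cdot 1=(-1)^{n}n!\,p^{n}S^{*}_{n}$ with $S^{*}_{n}$ generated by $\exp\left(-\sum_{n\geq1}p_{n}w^{n}/n\right)$, the recursion $\varphi_{-s}S^{*}_{n}=p^{-1}\left(S_{s-1}(-x-y)S^{*}_{n}-A_{s,t}S^{*}_{n-1}\right)$, and the resummation via Lemma~\ref{le2}) generalizes as you describe. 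The one inaccuracy is your explanation of why the multiplicative prefactor truncates to $\sum_{m=0}^{t}S_{m}(x+y)z^{-t-1+m}$: this is not a consequence of $S_{m}(-x-y)=0$ for $m<0$, but of the annihilation $\beta_{i}\left(\varphi^{*j}_{-t}\cdot 1\right)=-j\varphi^{*j-1}_{-t}\varphi^{*}_{i-t}\cdot 1=0$ for $i\geq t+1$, which is the mechanism Appendix~A uses (for $t=1$) to exclude $S_{k}(x+y)$ with $k>t$ from $\varphi^{*n}_{-t}\cdot 1$.
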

\begin{corollary} Suppose that
\begin{align*}
\left(\left(\sum^{l}_{m=0}S_{m}(x+y)z^{-l-1+m}\right)w_{1}+\left(\sum^{k}_{m=0}S_{m}(x+y)z^{-k-1+m}\right)w_{2}\right)^{n}=\sum^{n(l+k+1)}_{i=0}f_{i}(w_{1},w_{2})z^{-n(l+k+2)+i}
\end{align*}
and let
\begin{align*}
p_{n}=\sum^{n(l+k+1)}_{i=0}f_{i}\left(w_{1},w_{2})(n(l+k+2)-i\right)x_{n(l+k+2)-i}.
\end{align*}
For $f(w_{1},w_{2})=\exp\left(-\sum_{n\geq 1}\frac{p_{n}}{n}\right)$, we have
\begin{align*}
&\exp\left(\sum_{i\geq 1}\frac{\partial x_{i}-\partial y_{i}}{i}z^{-i}\right)f(w_{1},w_{2})\\
=&\left(1-\left(\sum^{l}_{m=0}S_{m}(x+y)z^{-l-1+m}\right)w_{1}-\left(\sum^{k}_{m=0}S_{m}(x+y)z^{-k-1+m}\right)w_{2}\right)f(w_{1},w_{2}).
\end{align*}
Set $A_{s,t}=\sum^{t}_{m=0}S_{m}(x+y)S_{s+t-m}(-x-y)$. For $k\geq l\geq0$,
we have
\begin{align*}
\exp\left(b\varphi^{*}_{-k}\right)\exp\left(a\varphi^{*}_{-l}\right)\cdot 1=f(-ap,-bp)
\end{align*}
and
\begin{align*}
\exp\left(d\varphi_{-j}\varphi^{*}_{-k}\right)\exp\left(c\varphi_{-i}\varphi^{*}_{-l}\right)\cdot 1
=&\frac{1}{1-cA_{i,l}-dA_{j,k}+cA_{i,l}dA_{j,k}-cA_{i,k}dA_{j,l}}\\
&f\Big{(}\frac{-cS_{i-1}(-x-y)+cS_{i-1}(-x-y)dA_{j,k}-dS_{j-1}(-x-y)cA_{i,k}}{1-cA_{i,l}-dA_{j,k}+cA_{i,l}dA_{j,k}-cA_{i,k}dA_{j,l}},\\
&\frac{-dS_{j-1}(-x-y)+dS_{j-1}(-x-y)cA_{i,l}-cS_{i-1}(-x-y)dA_{j,l}}{1-cA_{i,l}-dA_{j,k}+cA_{i,l}dA_{j,k}-cA_{i,k}dA_{j,l}}\Big{)}.
\end{align*}
\end{corollary}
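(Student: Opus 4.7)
The plan is to prove the three parts of the Corollary in order, using the operator identity to drive the computation of $\varphi_{-m}f(-ap,-bp)$, and then iterating and extracting coefficients to isolate the pair-exponentials.

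For the operator identity (part 1), the key is that $\exp\bigl(\sum_{i\geq 1}\frac{\partial x_i-\partial y_i}{i}z^{-i}\bigr)$ is the translation sending $x_m\mapsto x_m+z^{-m}/m$ and $y_m\mapsto y_m-z^{-m}/m$, leaving every $x_m+y_m$ invariant. Hence each coefficient $f_i(w_1,w_2)$, being a polynomial in the $S_m(x+y)$'s, is fixed by the shift. The only change comes from the linear-in-$x$ expression $p_n$, which picks up $\sum_i f_i\,z^{-(n(l+k+2)-i)}$; by the definition of the $f_i$'s in \eqref{n1} this equals $H(z;w_1,w_2)^n$, where
\begin{equation*}
H(z;w_1,w_2):=\Bigl(\sum_{m=0}^{l}S_m(x+y)z^{m-l-1}\Bigr)w_1+\Bigl(\sum_{m=0}^{k}S_m(x+y)z^{m-k-1}\Bigr)w_2.
\end{equation*}
Thus the shifted exponent becomes $-\sum p_n/n-\sum H^n/n=-\sum p_n/n+\log(1-H)$, and exponentiating yields the claimed factor $(1-H)$.

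For part 2, $\exp(b\varphi^*_{-k})\exp(a\varphi^*_{-l})\cdot 1=f(-ap,-bp)$ follows by iterating the bosonization \eqref{eq11} of $\varphi^*(z)$: starting from $\varphi^*(z)\cdot 1=p\exp(\sum_{n>0}(x_n+y_n)z^n)\sum_{k\geq 1}kx_kz^{k-1}$, each further application of $\varphi^*(w)$ passes through the preceding $\exp$-factor via the same shift as in part 1, and the resulting generating series is exactly what is encoded by the $f_i$'s and $p_n$'s. Next, using $\varphi(z)=p^{-1}\exp(-\sum_{n>0}(x_n+y_n)z^n)\exp(\sum_{n>0}(\partial x_n-\partial y_n)z^{-n}/n)$, the part-1 identity gives
\begin{equation*}
\varphi(z)\,f(-ap,-bp)=p^{-1}\exp\Bigl(-\sum_{n>0}(x_n+y_n)z^n\Bigr)\bigl(1-H(z;-ap,-bp)\bigr)f(-ap,-bp),
\end{equation*}
and extracting the coefficient of $z^{m-1}$ (recognising the Cauchy products as $A_{m,l}$ and $A_{m,k}$) yields $\varphi_{-m}f(-ap,-bp)=M_m\,f(-ap,-bp)$ with $M_m:=p^{-1}S_{m-1}(-x-y)+aA_{m,l}+bA_{m,k}$. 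Since $M_m$ depends only on $x+y$ and $p^{-1}$---all invariant under the shift---iteration preserves the form, giving $\varphi_{-i}^{n_1}\varphi_{-j}^{n_2}f(-ap,-bp)=M_i^{n_1}M_j^{n_2}f(-ap,-bp)$ and hence $\exp(c\varphi_{-i})\exp(d\varphi_{-j})\exp(b\varphi^*_{-k})\exp(a\varphi^*_{-l})\cdot 1=\exp(cM_i+dM_j)f(-ap,-bp)$. The ``$[a^{n_1}b^{n_2}p^0]$''-type diagonal extraction used in the proof of Theorem~\ref{th1}, now carried out in the two independent variable pairs $(a,c)$ and $(b,d)$, then isolates $\exp(d\varphi_{-j}\varphi^*_{-k})\exp(c\varphi_{-i}\varphi^*_{-l})\cdot 1$.

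The main obstacle lies in this last extraction. The four operators $\varphi_{-i},\varphi_{-j},\varphi^*_{-l},\varphi^*_{-k}$ can be paired in four ways, producing not only the diagonal $A_{i,l},A_{j,k}$ but also the cross-contractions $A_{i,k},A_{j,l}$. The resulting bivariate summation is the resolvent of the matrix $M=\left(\begin{smallmatrix}cA_{i,l}&cA_{i,k}\\dA_{j,l}&dA_{j,k}\end{smallmatrix}\right)$ acting on the source vector $\vec{s}=\bigl(-cS_{i-1}(-x-y),\,-dS_{j-1}(-x-y)\bigr)^T$ arising from the $p^{-1}S_{*-1}$ pieces of $M_i,M_j$. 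The denominator $1-cA_{i,l}-dA_{j,k}+cd\,A_{i,l}A_{j,k}-cd\,A_{i,k}A_{j,l}$ is precisely $\det(I-M)$, and the two arguments of $f$ in the final formula are the components of $(I-M)^{-1}\vec{s}$. Recognising and carrying out this $2\times2$ resolvent summation---instead of the single geometric series of Lemma~\ref{le2}, one uses a double application combined with a MacMahon master-theorem identity---is the technical crux; once the determinant denominator is in place, the exponential factor $f(\cdot,\cdot)$ and its two arguments follow by matching with the part-1 shift and the definition of $f$.
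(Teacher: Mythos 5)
The paper states this corollary without proof, so there is no argument of the authors' to measure you against; what you have written is the natural extension of the method of Theorem~\ref{th1} and Appendix~A, which is surely what was intended. Your three steps are sound: the shift-operator computation for the first identity is complete and correct (the $f_i$ are functions of $x+y$ only, hence invariant, and the linear-in-$x$ part of $p_n$ picks up exactly $H^n$); the formula $\varphi_{-m}f(-ap,-bp)=\bigl(p^{-1}S_{m-1}(-x-y)+aA_{m,l}+bA_{m,k}\bigr)f(-ap,-bp)$ is right (I checked it reproduces $\varphi_{-i}\varphi^*_{-l}\cdot 1$ at first order, and the order-$cd$ coefficient of your $\exp(cM_i+dM_j)f$ after the double diagonal extraction agrees with the stated right-hand side); and your identification of the denominator as $\det(I-M)$ and of the two arguments of $f$ as the components of $(I-M)^{-1}\vec s$ is exactly correct.

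Two places remain sketches rather than proofs, and you should be aware of what each requires. First, the claim $\exp(b\varphi^*_{-k})\exp(a\varphi^*_{-l})\cdot 1=f(-ap,-bp)$ is not just ``the same shift as in part~1'': $\varphi^*(z)$ also contains the derivative terms $\sum_k\partial x_k z^{-k-1}+p\partial p\,z^{-1}$ and the left factor $\exp\bigl(\sum(x_n+y_n)z^n\bigr)$, and the one-variable analogue \eqref{ap1} in Appendix~A needs an induction in which the $\beta_i$-annihilation argument ($\beta_i$ kills the state for $i\geq k+1$) is what restricts the surviving $S_m(x+y)$ to $0\leq m\leq k$ — this is precisely where the cutoffs $l,k$ in $H$ come from, and the two-variable induction must be carried out. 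Second, the final extraction cannot be done by taking the coefficient of $p^0$ alone as in Theorem~\ref{th1}: a single charge variable only enforces $m_1+m_2=n_1+n_2$, not the separate matchings $m_1=n_1$, $m_2=n_2$, so one genuinely needs the double diagonal in the pairs $(a,c)$ and $(b,d)$ with factorial reweighting, and the resulting sum is a two-variable generalization of Lemma~\ref{le2} whose closed form is the MacMahon-type resolvent you describe. Neither step fails, but both are nontrivial computations that your outline asserts rather than performs.
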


\subsection{Borchardt's identity}
Following \cite{DJKM1981B}, we define the Fock space $\mathcal{M}^{*}$  of the charged free bosons by
\begin{align}\label{e:B3}
\langle 0|\varphi_{i}=\langle 0|\varphi^{*}_{i+1}=0,~~i<0
\end{align}
with the inner product $\langle 0|1|0\rangle=1$, then (cf. \eqref{e:B4})
\begin{align}\label{e:B1}
\langle 0|b(i-1)=\langle 0|c(i)=\langle 0|\beta_i=0,~~i\leq 0.
\end{align}
Recall \eqref{eq4}
we then have
\begin{align}\label{e:B2}
b(i+1)|0\rangle=c(i)|0\rangle=\beta_i|0\rangle=0,~~i\geq 0.
\end{align}
\eqref{e:B1} and \eqref{e:B2} will be proved in Appendix B.

Computing  $\langle 0|\varphi(z_{1})\dots \varphi(z_{n})\varphi^{*}(w_{1})\dots \varphi^{*}(w_{n})|0\rangle$ in two methods (cf. \cite{VOS2012}),
we get the following result.
\begin{proposition}[Borchardt's identity \cite{Bor1855,IKO2004}] For any positive integer $n$
\begin{align*}
\det\left(\frac{1}{(z_{i}-w_{j})^{2}}\right)_{1\leq i,j\leq n}=\det\left(\frac{1}{z_{i}-w_{j}}\right)_{1\leq i,j\leq n}\mathrm{perm}\left(\frac{1}{z_{i}-w_{j}}\right)_{1\leq i,j\leq n}.
\end{align*}
where $\mathrm{perm} A$ is the permanent of square matrix $A$ defined by
\begin{align*}
\mathrm{perm}A=\sum_{\sigma\in S_{n}}a_{1\sigma(1)}a_{2\sigma(2)}\cdots a_{n\sigma(n)}.
\end{align*}
\end{proposition}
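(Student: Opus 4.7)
The plan is to compute the vacuum correlator
\[
\Phi(z_1,\dots,z_n;w_1,\dots,w_n)=\langle 0|\varphi(z_1)\cdots\varphi(z_n)\varphi^{*}(w_1)\cdots\varphi^{*}(w_n)|0\rangle
\]
in two independent ways and then equate the results. The two sides of Borchardt's identity will arise as these two evaluations.

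The first evaluation uses the charged free bosons directly. Since $[\varphi_i,\varphi^{*}_j]=\delta_{i,-j}$ and the OPE \eqref{sp3} yields the single nontrivial contraction $\langle 0|\varphi(z)\varphi^{*}(w)|0\rangle=\tfrac{1}{z-w}$, the bosonic Wick theorem (contractions are commutators, so all pairings contribute with a plus sign) collapses the correlator to a sum over perfect matchings of the $\varphi(z_i)$'s with the $\varphi^{*}(w_j)$'s:
\[
\Phi=\sum_{\sigma\in S_n}\prod_{i=1}^n\frac{1}{z_i-w_{\sigma(i)}}=\mathrm{perm}\!\left(\frac{1}{z_i-w_j}\right)_{1\le i,j\le n}.
\]

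The second evaluation uses the FMS realization \eqref{ch3-1}. Substituting $\varphi(z)=c(z)Y(e^{-\beta},z)$ and $\varphi^{*}(w)=\partial b(w)Y(e^{\beta},w)$ and using that the $bc$ fermions and the $\beta$-Heisenberg sector live on disjoint tensor factors of $V_L$, I separate $\Phi$ (keeping track of sign contributions from moving the odd operators past each other, since $(\beta|\beta)=-1$ makes $e^{\pm\beta}$ odd) into a fermionic part and a $\beta$-bosonic part. The fermionic factor is computed by the fermionic Wick theorem with $\langle 0|c(z)b(w)|0\rangle=\tfrac{1}{z-w}$, differentiated in $w$ to account for $\partial b$:
\[
\langle 0|c(z_1)\cdots c(z_n)\partial b(w_1)\cdots\partial b(w_n)|0\rangle=\det\!\left(\frac{1}{(z_i-w_j)^2}\right)_{1\le i,j\le n}.
\]
The $\beta$-bosonic factor, a correlator of lattice vertex operators of charges $\pm\beta$ with $(\beta|\beta)=-1$, equals (up to cocycle signs)
\[
\prod_{i<j}(z_i-z_j)^{-1}\prod_{i<j}(w_i-w_j)^{-1}\prod_{i,j}(z_i-w_j).
\]
Invoking Cauchy's determinant identity $\det\!\left(\tfrac{1}{z_i-w_j}\right)=\prod_{i<j}(z_j-z_i)(w_i-w_j)\big/\prod_{i,j}(z_i-w_j)$ identifies this factor with $1/\det\!\left(\tfrac{1}{z_i-w_j}\right)$ up to a global sign $(-1)^{\binom{n}{2}}$.

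Equating the two evaluations and multiplying through by $\det(1/(z_i-w_j))$ yields Borchardt's identity. The main obstacle I anticipate is bookkeeping: the parity of $e^{\pm\beta}$ gives super-commutation signs when reordering the fermionic and bosonic factors in Method B, the 2-cocycle values $\varepsilon(\pm\beta,\pm\beta)$ must be tracked through each OPE, and the sign $(-1)^{\binom{n}{2}}$ produced by reordering $\prod(w_j-w_i)\to\prod(w_i-w_j)$ in Cauchy must be reconciled with the signs accumulated above. All of these contributions should be verified to cancel so that the final identity emerges with the correct sign, as it must because both sides are manifestly symmetric rational functions whose leading behavior on either side can be matched.
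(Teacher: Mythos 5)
Your proposal follows essentially the same route as the paper: compute $\langle 0|\varphi(z_1)\cdots\varphi(z_n)\varphi^{*}(w_1)\cdots\varphi^{*}(w_n)|0\rangle$ once by bosonic Wick contractions (giving the permanent) and once through the FMS bosonization $\varphi=cY(e^{-\beta},\cdot)$, $\varphi^{*}=\partial b\,Y(e^{\beta},\cdot)$ (giving the $\det(1/(z_i-w_j)^2)$ times the lattice prefactor), then apply Cauchy's determinant identity. The sign bookkeeping you flag is exactly the $(-1)^{n(n-1)/2}$ appearing in both the fermionic correlator and Cauchy's identity, and it does cancel as the paper verifies.
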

\begin{proof} It follows from (\ref{sp4}) and (\ref{sp3}) that
\begin{align}
\label{eq24}\notag\langle0|c(z_{1})\cdots c(z_{n})\partial b(w_{1})\cdots \partial b(w_{n})|0\rangle
&=(-1)^{\frac{n(n-1)}{2}}\sum_{\sigma}(-1)^{sgn(\sigma)}\prod^{n}_{i=1}\frac{1}{(z_{i}-w_{\sigma(i)})^{2}}\\
&=(-1)^{\frac{n(n-1)}{2}}\det\left(\frac{1}{(z_{i}-w_{j})^{2}}\right)_{1\leq i,j\leq n},\\
\label{eq23}\notag\langle 0|\varphi(z_{1})\cdots \varphi(z_{n})\varphi^{*}(w_{1})\cdots \varphi^{*}(w_{n})|0\rangle
&=\sum_{\sigma\in S_{n}}\prod_{1\leq i\leq n}\frac{1}{z_{i}-w_{\sigma(i)}}\\
&=\mathrm{perm}\left(\frac{1}{z_{i}-w_{j}}\right)_{1\leq i,j\leq n}.
\end{align}
By the boson-boson correspondence \eqref{ch3-1} \cite{FF1992,Wang1997}
\begin{align*}
\varphi(z_{1})\cdots \varphi(z_{n})\varphi^{*}(w_{1})\cdots \varphi^{*}(w_{n})|0\rangle=c(z_{1})\cdots c(z_{n})\partial b(w_{1})\cdots \partial b(w_{n})|0\rangle\\
\frac{\prod_{1\leq s,t\leq n}(z_{s}-w_{t})}{\prod_{1\leq i<j\leq n}(z_{i}-z_{j})(w_{i}-w_{j})}
\exp\left(\sum_{m\geq 1}\frac{\beta_{-n}}{n}(w^{m}_{1}+\cdots w^{m}_{n}-z^{m}_{1}\cdots -z^{m}_{n})\right),
\end{align*}
we have
\begin{align}\label{eq25}
\notag&\langle0|\varphi(z_{1})\cdots \varphi(z_{n})\varphi^{*}(w_{1})\cdots \varphi^{*}(w_{n})|0\rangle\\
=&\frac{\prod_{1\leq s,t\leq n}(z_{s}-w_{t})}{\prod_{1\leq i<j\leq n}(z_{i}-z_{j})(w_{i}-w_{j})}\langle0|c(z_{1})\cdots c(z_{n})\partial b(w_{1})\cdots \partial b(w_{n})|0\rangle.
\end{align}
Then the relations (\ref{eq24}), (\ref{eq23}) and (\ref{eq25}) imply that
\begin{align*}
\mathrm{perm}\left(\frac{1}{z_{i}-w_{j}}\right)_{1\leq i,j\leq n}=(-1)^{\frac{n(n-1)}{2}}\frac{\prod_{1\leq s,t\leq n}(z_{s}-w_{t})}{\prod_{1\leq i<j\leq n}(z_{i}-z_{j})(w_{i}-w_{j})}\det\left(\frac{1}{(z_{i}-w_{j})^{2}}\right)_{1\leq i,j\leq n}.
\end{align*}
Using Cauchy's determinant identity
\begin{align*}
\det\left(\frac{1}{z_{i}-w_{j}}\right)_{1\leq i,j\leq n}=(-1)^{\frac{n(n-1)}{2}}\frac{\prod_{1\leq i<j\leq n}(z_{i}-z_{j})(w_{i}-w_{j})}{\prod_{1\leq s,t\leq n}(z_{s}-w_{t})},
\end{align*}
we have
\begin{align*}
\det\left(\frac{1}{(z_{i}-w_{j})^{2}}\right)_{1\leq i,j\leq n}=\det\left(\frac{1}{z_{i}-w_{j}}\right)_{1\leq i,j\leq n}\mathrm{perm}\left(\frac{1}{z_{i}-w_{j}}\right)_{1\leq i,j\leq n}.
\end{align*}
\end{proof}

\section{$q$-dimension}
In this section we first compute $q$-series of some sets by $q$-Pochhammer symbols and then use them to
derive $q$-dimensions of $\mathcal{M}^{l},~\overline{\mathcal{F}}^{l},~\mathcal{F}^{l}$ by combinatorial method.
We also derive some $q$-identities by comparing with the results in \cite{FF1992,Wang1997}.
We adopt the usual convention to denote the $q$-Pochhammer symbols or $q$-integers by
\begin{align*}
&(a;q)_{n}=(1-a)(1-aq)\dots (1-aq^{n-1})=\prod^{n-1}_{j=0}(1-aq^{j}),\\
&(a;q)_{\infty}=\prod^{\infty}_{j=0}(1-aq^{j}),~~~~|q|< 1.
\end{align*}
A partition $\lambda=(\lambda_{1},\lambda_{2},\dots ,\lambda_{l})$ is a sequence of ordered non-negative integers $\lambda_{1}\geq\lambda_{2}\geq\dots \geq\lambda_{l}\geq 0~(or ~0\leq\lambda_{1}\leq\lambda_{2}\leq\dots \leq\lambda_{l})$. The $weight$ of
$\lambda$ is defined by $|\lambda|=\lambda_{1}+\lambda_{2}+\dots+\lambda_{l}$.
 Let $\mathcal{P}$ be a set of partitions with some properties. We define 
\begin{align*}
G(\mathcal{P})=\sum_{\lambda\in \mathcal{P}}q^{|\lambda|}.
\end{align*}
\begin{lemma}\label{pro9}
Let $\mathcal{P}_{k}^{s}=\{\lambda=(\lambda_{1},\lambda_{2},\dots ,\lambda_{s})|k\leq\lambda_{1}\leq\lambda_{2}\leq\dots \leq\lambda_{s}\}$. Then
\begin{align}
G(\mathcal{P}_{k}^{s})=\frac{q^{ks}}{(q;q)_{s}}.
\end{align}
\end{lemma}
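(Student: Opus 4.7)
The plan is to reduce the lemma to the well-known generating function for partitions with at most $s$ parts, via a simple shift.

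First I would perform the substitution $\mu_i = \lambda_i - k$ for $i=1,\dots,s$. Since $\lambda \in \mathcal{P}_k^s$ means $k \le \lambda_1 \le \lambda_2 \le \cdots \le \lambda_s$, the new tuple satisfies $0 \le \mu_1 \le \mu_2 \le \cdots \le \mu_s$, and the weight transforms as $|\lambda| = |\mu| + ks$. This bijection between $\mathcal{P}_k^s$ and $\mathcal{P}_0^s$ immediately gives
\begin{align*}
G(\mathcal{P}_k^s) \;=\; q^{ks}\, G(\mathcal{P}_0^s).
\end{align*}

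Next I would evaluate $G(\mathcal{P}_0^s)$ by encoding each weakly increasing tuple $0 \le \mu_1 \le \cdots \le \mu_s$ by its successive differences $d_i = \mu_i - \mu_{i-1} \ge 0$ (with the convention $\mu_0 = 0$). Then $\mu_i = d_1 + \cdots + d_i$, and summing gives $|\mu| = s\,d_1 + (s-1)\,d_2 + \cdots + d_s$. The map $\mu \mapsto (d_1,\dots,d_s)$ is a bijection onto $\mathbb{Z}_{\ge 0}^s$, so
\begin{align*}
G(\mathcal{P}_0^s) \;=\; \sum_{d_1,\dots,d_s \ge 0} q^{s d_1 + (s-1) d_2 + \cdots + d_s} \;=\; \prod_{i=1}^{s} \frac{1}{1-q^i} \;=\; \frac{1}{(q;q)_s}.
\end{align*}
Combining the two displays yields the claim.

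There is no substantive obstacle here: both steps are elementary and classical, and the only thing to be careful about is the indexing convention for partitions (weakly increasing rather than weakly decreasing) and the inclusion of possible zero parts, which are handled cleanly by the difference encoding. The proof as outlined is about three lines, and will serve as the base case for computing $q$-dimensions of the various charge sectors $\mathcal{M}^l$, $\overline{\mathcal{F}}^l$, $\mathcal{F}^l$ later in the section.
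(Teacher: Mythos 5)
Your proof is correct, but it takes a different route from the paper. The paper argues by induction on $s$: the base case $G(\mathcal{P}_k^1)=q^k/(1-q)$ is a geometric series, and the inductive step conditions on the smallest part $\lambda_1=k+j$, writing $G(\mathcal{P}_k^s)=\sum_{j\geq 0}q^{k+j}\,G(\mathcal{P}_{k+j}^{s-1})$ and summing the resulting geometric series in $q^{(k+j)s}$ to pick up the new factor $1/(1-q^s)$. You instead give a direct bijective argument: first the shift $\mu_i=\lambda_i-k$ extracts the factor $q^{ks}$, and then the successive-differences encoding $d_i=\mu_i-\mu_{i-1}$ identifies $\mathcal{P}_0^s$ with $\mathbb{Z}_{\geq 0}^s$ with weight $sd_1+(s-1)d_2+\cdots+d_s$, so the generating function factors as $\prod_{i=1}^s(1-q^i)^{-1}$ in one stroke. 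Both arguments are elementary and of comparable length; yours has the advantage of being a closed-form combinatorial bijection (no induction needed, and it makes transparent where each factor $1/(1-q^i)$ comes from), while the paper's recursive decomposition has the minor advantage of adapting verbatim to the strict-inequality variant $\mathcal{D}_k^s$ in the following lemma, which the authors dispatch with ``similarly.'' Your difference encoding also handles that case (take $d_i\geq 1$ for $i\geq 2$), so nothing is lost either way.
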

\begin{proof} We use induction on $s$. The case of $s=1$ is clear as
$G(\mathcal{P}_{k}^{1})=q^{k}+q^{k+1}+\dots =\frac{q^{k}}{1-q}$. Assume $s-1$ is established, then
\begin{align*}
G(\mathcal{P}_{k}^{s})=q^{k}G(\mathcal{P}_{k}^{s-1})+q^{k+1}G(\mathcal{P}_{k+1}^{s-1})+q^{k+2}G(\mathcal{P}_{k+2}^{s-1})+\dots=\frac{q^{ks}}{(q;q)_{s}}.
\end{align*}
\end{proof}
Similarly we have the following.
\begin{lemma}\label{cor1}
For $\mathcal{D}_{k}^{s}=\{\lambda=(\lambda_{1},\lambda_{2},\dots ,\lambda_{s})|k\leq\lambda_{1}<\lambda_{2}<\dots <\lambda_{s}\}$,
\begin{align}
G(\mathcal{D}_{k}^{s})=\frac{q^{\frac{(2k+s-1)s}{2}}}{(q;q)_{s}}.
\end{align}
\end{lemma}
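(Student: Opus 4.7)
The cleanest route is to reduce this to the just-proved Lemma~\ref{pro9} by the standard ``shift'' bijection between strict and weak partitions. Concretely, I would define the map $\Phi:\mathcal{D}_k^s\to\mathcal{P}_k^s$ by
\[
\Phi(\lambda_1,\lambda_2,\dots,\lambda_s)=(\mu_1,\mu_2,\dots,\mu_s),\qquad \mu_i=\lambda_i-(i-1),
\]
and verify in one line that the strict chain $\lambda_1<\lambda_2<\cdots<\lambda_s$ becomes the weak chain $\mu_1\le\mu_2\le\cdots\le\mu_s$ (since $\lambda_{i+1}\ge\lambda_i+1$ gives $\mu_{i+1}\ge\mu_i$), while the condition $\lambda_1\ge k$ is preserved as $\mu_1\ge k$. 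The inverse map $\mu_i\mapsto\mu_i+(i-1)$ shows $\Phi$ is a bijection.

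The weight changes by a deterministic amount:
\[
|\lambda|=\sum_{i=1}^{s}\mu_i+\sum_{i=1}^{s}(i-1)=|\mu|+\binom{s}{2}.
\]
Thus $G(\mathcal{D}_k^s)=q^{s(s-1)/2}\,G(\mathcal{P}_k^s)$, and Lemma~\ref{pro9} gives
\[
G(\mathcal{D}_k^s)=q^{s(s-1)/2}\cdot\frac{q^{ks}}{(q;q)_s}=\frac{q^{\frac{(2k+s-1)s}{2}}}{(q;q)_s},
\]
which is the claim.

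As an alternative (matching the style of the previous lemma), I could argue by induction on $s$: for $s=1$ the set is just $\{k,k+1,k+2,\dots\}$ with generating function $q^k/(1-q)$, matching the formula. For the inductive step, condition on the smallest part $\lambda_1=j\ge k$; then $(\lambda_2,\dots,\lambda_s)\in\mathcal{D}_{j+1}^{s-1}$, giving
\[
G(\mathcal{D}_k^s)=\sum_{j\ge k}q^{j}\,G(\mathcal{D}_{j+1}^{s-1})=\sum_{j\ge k}q^{j}\,\frac{q^{\frac{(2j+s-1)(s-1)}{2}}}{(q;q)_{s-1}},
\]
and collapsing the geometric series in $j$ with the induction hypothesis yields the stated formula. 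The bijective proof is cleaner and bypasses the small indexing bookkeeping, so that is the one I would write up. No real obstacle arises beyond checking the arithmetic $ks+\binom{s}{2}=\tfrac{(2k+s-1)s}{2}$.
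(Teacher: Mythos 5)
Your proposal is correct. The paper itself gives no written proof of this lemma: it simply states ``Similarly we have the following,'' meaning the argument is the same induction on $s$ as in Lemma~\ref{pro9}, conditioning on the smallest part --- which is exactly your alternative sketch. Your primary argument, the shift bijection $\lambda_i\mapsto\lambda_i-(i-1)$ from $\mathcal{D}_k^s$ to $\mathcal{P}_k^s$ with weight offset $\binom{s}{2}$, is a genuinely different and cleaner route: it reduces the strict case to the already-proved weak case in one step, makes the extra factor $q^{s(s-1)/2}$ conceptually transparent, and avoids re-running the geometric-series collapse. The inductive route has the virtue of being self-contained and uniform with the paper's treatment of Lemma~\ref{pro9}, but buys nothing beyond that. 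One small caution: in your inductive sketch the induction hypothesis applied to $\mathcal{D}_{j+1}^{s-1}$ should give the exponent $\tfrac{(2j+s)(s-1)}{2}$, not $\tfrac{(2j+s-1)(s-1)}{2}$ (substitute $k\to j+1$, $s\to s-1$ in the stated formula); with that correction the sum over $j$ does collapse to $q^{ks+s(s-1)/2}/(q;q)_s$ as claimed. Since you elected to write up the bijective version, which is entirely correct, this slip is immaterial.
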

It is known \cite{Wang1997} that $\mathcal{M}\cong \sum_{l\in\mathbb{Z}}\overline{\mathcal{F}}^{l}\otimes e^{-l\beta}\mathbb{C}[y].$
In terms of the degree operator $J^{1}_{0}$ on $\mathcal{M}$ \eqref{e:deg}, we take
\begin{align}\label{eq20}
\notag &\mathrm{deg}~\varphi^{*}_{-n}=\mathrm{deg}~\varphi_{-n}=\mathrm{deg}~ b(-n)=\mathrm{deg}~ c(-n)=\mathrm{deg}~\alpha_{-n}=\mathrm{deg}~\beta_{-n}=n,\\
&\mathrm{deg}~e^{l\alpha}=\frac{l^{2}-l}{2},~\mathrm{deg}~e^{l\beta}=\frac{-l^{2}-l}{2},~\mathrm{deg}~|0\rangle=0,
\end{align}
in view of \eqref{ch2-3}, \eqref{ch2-2}, \eqref{eq8} and \eqref{eq10}.
We pass the gradation $\mathrm{deg}$ to any subspace $W\subseteq \mathcal{M}$, and denote
$W_{m}=\{w|w\in W,\mathrm{deg}(w)=m\}$. Then we define the $q$-dimension of $W$ as
\begin{align}\label{new1}
 \mathrm{dim}_q(W)=\mathrm{tr}_{W}q^{J^{1}_{0}}=\sum_{m\geq 0}\text{dim}W_{m}q^{m}.
\end{align}
The following result gives the $q$-dimensions for the spaces  $\mathcal{M}^{l}$, $\overline{\mathcal{F}}^{l}$ and $\mathcal{F}^{l}$.
\begin{proposition}\label{char} One has that
\begin{align}
&\mathrm{dim}_q(\mathcal{M}^{l})=\sum_{m\geq 0}\frac{q^{m+\delta_{l\geq 0}l}}{(q;q)_{m}(q;q)_{m+|l|}}  ,\\
&\mathrm{dim}_q(\overline{\mathcal{F}}^{l})=\sum_{m\geq0}\frac{q^{m^{2}+(|l|+1)m+\frac{|l|(|l|+1)}{2}}}{(q;q)_{m}(q;q)_{m+|l|}},\\
\label{sp6}&\mathrm{dim}_q(\mathcal{F}^{l})=\sum_{m\geq0}\frac{q^{m^{2}+m|l|+\frac{l(l+1)}{2}}}{(q;q)_{m}(q;q)_{m+|l|}}.
\end{align}
\end{proposition}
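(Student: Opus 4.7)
The plan is to compute each $q$-dimension by choosing an explicit PBW basis for the corresponding space, splitting the basis by charge, and then applying the bosonic (Lemma \ref{pro9}) and fermionic (Lemma \ref{cor1}) generating function formulas to each of the two sequences of indices. In every case, the $J^1_0$-degree decomposes additively over the two kinds of creation operators, so the final answer factorizes into a product of two such generating functions that is summed over a single free parameter.

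For $\mathcal{M}^l$, I would use the bosonic basis (\ref{eq5}) and regard each basis element as an ordered pair consisting of a $\varphi$-multiset on indices $\geq 1$ of total size $n=\sum n_s$ and a $\varphi^{*}$-multiset on indices $\geq 0$ of total size $m=\sum m_s$. By (\ref{eq7}) the charge $-J^0_0$ equals $n-m$, and by (\ref{eq8}) the $J^1_0$-degree splits as the sum of the two weights. Under the charge constraint $n-m=l$ (set $n=m+l$ when $l\geq 0$ and $m=n+|l|$ otherwise), Lemma \ref{pro9} identifies the $\varphi$-contribution as $G(\mathcal{P}_1^n)=q^n/(q;q)_n$ and the $\varphi^{*}$-contribution as $G(\mathcal{P}_0^m)=1/(q;q)_m$; multiplying and summing over the free parameter gives the claimed $\mathrm{dim}_q(\mathcal{M}^l)$, with the exponent $m+\delta_{l\geq 0}l$ coming from the $q^n=q^{m+l}$ factor in the positive-charge case and no such shift in the negative-charge case.

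For $\mathcal{F}^l$ and $\overline{\mathcal{F}}^l$, the analogous fermionic basis $b(-k_1)\cdots b(-k_p)c(-j_1)\cdots c(-j_q)|0\rangle$ has strictly decreasing indices by (\ref{e:bc}), with $k_s\geq 0$ and $j_t\geq 1$ from the vacuum relations (\ref{eq3}); for $\overline{\mathcal{F}}^l$ the vanishing of the zero-mode of $\partial b(z)$ additionally forces $k_s\geq 1$. The charge constraint reads $q-p=l$, and the $J^1_0$-degree is $\sum k_s+\sum j_t$. Lemma \ref{cor1} then supplies the generating functions $G(\mathcal{D}_0^p)=q^{p(p-1)/2}/(q;q)_p$ or $G(\mathcal{D}_1^p)=q^{p(p+1)/2}/(q;q)_p$ on the $b$-side and $G(\mathcal{D}_1^q)=q^{q(q+1)/2}/(q;q)_q$ on the $c$-side. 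Multiplying these, imposing $q-p=l$, and collapsing the quadratic exponent produces the two remaining formulas.

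The main obstacle is managing the sign of $l$ uniformly. Concretely, for $l\geq 0$ one sets $p=m$ and the exponent in the $\mathcal{F}^l$ case simplifies to $m^2+ml+l(l+1)/2$; for $l<0$ one sets $q=m$ and obtains $m^2+m|l|+|l|(|l|-1)/2$. The unified closed form $m^2+m|l|+l(l+1)/2$ in (\ref{sp6}) then requires the elementary identity $l(l+1)/2=|l|(|l|-1)/2$ for $l<0$; an analogous identity covers the $|l|(|l|+1)/2$ term in the $\overline{\mathcal{F}}^l$ formula. Beyond this sign bookkeeping the proof is a direct computation using the lemmas.
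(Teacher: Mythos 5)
Your proposal is correct and follows essentially the same route as the paper's proof: both decompose each charge-$l$ subspace into a product of a $\varphi$/$b$-part and a $\varphi^*$/$c$-part constrained by $n-m=l$, apply Lemma \ref{pro9} (bosonic, weakly increasing indices) and Lemma \ref{cor1} (fermionic, strictly increasing indices) to each factor, and sum over the free parameter $m$; your exponent bookkeeping, including the identity $l(l+1)/2=|l|(|l|-1)/2$ for $l<0$ and the restriction $k_s\geq 1$ forced by $\partial b(z)$ in $\overline{\mathcal{F}}$, matches the paper's computation.
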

\begin{proof} Let
\begin{align*}
\Phi^{m}={\rm span}\{\varphi_{-i_{1}}\dots\varphi_{-i_{m}}|i_{1}\geq\dots\geq i_{m}\geq 1\};
~~~~\Phi^{*m}={\rm span}\{\varphi^{*}_{-j_{1}}\dots\varphi^{*}_{-j_{m}}|j_{1}\geq\dots\geq j_{m}\geq0\}.
\end{align*}
It follows from (\ref{eq20}) that $\mathrm{dim}_q(\Phi^{m})=G(\mathcal{P}_{1}^{m})=\frac{q^{m}}{(q;q)_{m}}$ and $\mathrm{dim}_q(\Phi^{*m})=G(\mathcal{P}_{0}^{m})=\frac{1}{(q;q)_{m}}$, where
the special cases are $G(\Phi^{0})=G(\Phi^{*0})=q^{0}=1$.
For $l\geq 0$ and using (\ref{eq20}), we then have
\begin{align*}
\mathrm{dim}_q(\mathcal{M}^{l})=\sum_{m\geq 0}\mathrm{dim}_q(\Phi^{*m})\mathrm{dim}_q(\Phi^{m+l})=\sum_{m\geq 0}\frac{q^{m+l}}{(q;q)_{m}(q;q)_{m+l}},
\end{align*}
and for $l< 0$,
\begin{align*}
\mathrm{dim}_q(\mathcal{M}^{l})=\sum_{m\geq 0}\mathrm{dim}_q(\Phi^{*m-l})\mathrm{dim}_q(\Phi^{m})=\sum_{m\geq 0}\frac{q^{m}}{(q;q)_{m}(q;q)_{m-l}}.
\end{align*}
Similarly, set
$B_{k}^{m}={\rm span}\{b(-i_{1})\dots b(-i_{m})|i_{1}>\dots >i_{m}\geq k\}$ and $C^{m}={\rm span}\{c(-j_{1})\dots c(-j_{m})|j_{1}>\dots >j_{m}\geq 1\}$.
So for $l\geq 0$ and counting the degree (\ref{eq20}), we have that
\begin{align*}
&\mathrm{dim}_q(\overline{\mathcal{F}}^{l})=\sum_{m\geq 0}\mathrm{dim}_q({B_{1}^{m}})\mathrm{dim}_q(C^{m+l})=\sum_{m\geq 0}G(\mathcal{D}_{1}^{m})G(\mathcal{D}_{1}^{m+l})
=\sum_{m\geq0}\frac{q^{m^{2}+(l+1)m+\frac{l(l+1)}{2}}}{(q;q)_{m}(q;q)_{m+l}},\\
&\mathrm{dim}_q(\mathcal{F}^{l})=\sum_{m\geq 1}\mathrm{dim}_q({B_{0}^{m}})\mathrm{dim}_q(C^{m+l})=\sum_{m\geq 0}G(\mathcal{D}_{0}^{m})G(\mathcal{D}_{1}^{m+l})
=\sum_{m\geq0}\frac{q^{m^{2}+ml+\frac{l(l+1)}{2}}}{(q;q)_{m}(q;q)_{m+l}}.
\end{align*}
Similar for $l< 0$,
\begin{align*}
&\mathrm{dim}_q(\overline{\mathcal{F}}^{l})=\sum_{m\geq 0}\mathrm{dim}_q({B_{1}^{m-l}})\mathrm{dim}_q(C^{m})=\sum_{m\geq 0}G(\mathcal{D}_{1}^{m-l})G(\mathcal{D}_{1}^{m})
=\sum_{m\geq0}\frac{q^{m^{2}+(-l+1)m+\frac{(-l)(-l+1)}{2}}}{(q;q)_{m}(q;q)_{m-l}},\\
&\mathrm{dim}_q(\mathcal{F}^{l})=\sum_{m\geq 1}\mathrm{dim}_q({B_{0}^{m-l}})\mathrm{dim}_q(C^{m})=\sum_{m\geq 0}G(\mathcal{D}_{0}^{m-l})G(\mathcal{D}_{1}^{m})
=\sum_{m\geq0}\frac{q^{m^{2}-ml+\frac{l(l+1)}{2}}}{(q;q)_{m}(q;q)_{m-l}}.
\end{align*}
\end{proof}
\begin{proposition}\label{char1} \cite{FF1992,Wang1997} One also has that
\begin{align}
& \mathrm{dim}_q(\mathcal{M}^{l})=\frac{1}{(q;q)^{2}_{\infty}}\sum_{s\geq 0}(-1)^{s}q^{\frac{s(s+1)}{2}+(s+\delta_{l\geq 0})|l|},   \\
&\mathrm{dim}_q(\overline{\mathcal{F}}^{l})= \frac{1}{(q;q)_{\infty}}\sum_{s\geq |l|}(-1)^{s+l}q^{\frac{s(s+1)}{2}}.
\end{align}
\end{proposition}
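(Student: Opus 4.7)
The plan is to obtain both closed-form characters by combining the free-field description of the relevant Fock spaces with Jacobi's triple product identity. I will treat the $\overline{\mathcal{F}}^l$ formula as the main step and then derive the $\mathcal{M}^l$ formula from the Wang decomposition $\mathcal{M}\cong\bigoplus_{l\in\mathbb{Z}}\overline{\mathcal{F}}^l\otimes e^{-l\beta}\mathbb{C}[y]$ recalled just before Proposition~\ref{char}.

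First I would write down the two-variable character of $\overline{\mathcal{F}}$: since $\overline{\mathcal{F}}$ is the fermionic Fock space freely generated by the odd operators $c(-n)$ and $b(-n)$ ($n\geq 1$), of charges $+1$ and $-1$ and all of degree $n$ by \eqref{eq20}, one has
\[
\sum_{l\in\mathbb{Z}} z^l\,\mathrm{dim}_q(\overline{\mathcal{F}}^l)\;=\;\prod_{n\geq 1}(1+zq^n)(1+z^{-1}q^n).
\]
Next I would apply Jacobi's triple product in the form
\[
\sum_{k\in\mathbb{Z}} z^k q^{k(k+1)/2}\;=\;(q;q)_\infty\prod_{n\geq 1}(1+zq^n)(1+z^{-1}q^{n-1}),
\]
together with the elementary identity $\prod_{n\geq 1}(1+z^{-1}q^{n-1})=(1+z^{-1})\prod_{n\geq 1}(1+z^{-1}q^n)$, to rewrite the character as
\[
\sum_{l\in\mathbb{Z}} z^l\,\mathrm{dim}_q(\overline{\mathcal{F}}^l)\;=\;\frac{1}{(1+z^{-1})(q;q)_\infty}\sum_{k\in\mathbb{Z}} z^k q^{k(k+1)/2}.
\]
For $l\geq 0$ I would then expand $(1+z^{-1})^{-1}=\sum_{j\geq 0}(-1)^j z^{-j}$ and pick off the coefficient of $z^l$ (terms with $k-j=l$), arriving at exactly $\frac{1}{(q;q)_\infty}\sum_{s\geq l}(-1)^{s+l}q^{s(s+1)/2}$. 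The case $l<0$ then follows from the manifest $z\leftrightarrow z^{-1}$ symmetry of the character, equivalently the $b\leftrightarrow c$ involution of the $bc$ ghost system, which gives $\overline{\mathcal{F}}^l\cong\overline{\mathcal{F}}^{-l}$ as graded spaces.

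For the $\mathcal{M}^l$ formula, I would invoke the decomposition $\mathcal{M}^l\cong\overline{\mathcal{F}}^l\otimes e^{-l\beta}\mathbb{C}[y]$, observing that the $\mathcal{M}$-charge of a summand is carried entirely by the $\beta$-factor since $\beta_0$ annihilates $\overline{\mathcal{F}}^l$. The polynomial factor $\mathbb{C}[y]$ contributes $(q;q)_\infty^{-1}$, while by \eqref{eq20} the ground state $e^{-l\beta}$ has weight $-l(l-1)/2$ for $l\geq 0$ and $-|l|(|l|+1)/2$ for $l<0$. Multiplying the $\overline{\mathcal{F}}^l$-character by these two factors and re-indexing the alternating sum via $s=t+|l|$ will collapse the two different ground-state shifts into the single exponent $(s+\delta_{l\geq 0})|l|$ required by the statement, the $\delta$-symbol arising naturally from the difference between $l(l+1)/2-l(l-1)/2=l$ and $|l|(|l|+1)/2-|l|(|l|+1)/2=0$ in the two sign regimes.

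The main technical subtlety lies in the coefficient extraction: the expansion $(1+z^{-1})^{-1}=\sum_{j\geq 0}(-1)^j z^{-j}$ is only valid as a formal Laurent series in $z^{-1}$ and so legitimately produces $[z^l]$ only for $l\geq 0$; negative $l$ therefore demands either the $z\leftrightarrow z^{-1}$ symmetry described above or the dual expansion $(1+z^{-1})^{-1}=\sum_{j\geq 0}(-1)^j z^{j+1}$ followed by a consistency check via Jacobi's triple product. Apart from this one piece of bookkeeping, and the routine re-indexing that produces the $\delta_{l\geq 0}$ factor, everything else is a standard manipulation of theta-like series.
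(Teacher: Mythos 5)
Your argument is correct, but it is worth noting that the paper does not actually prove Proposition~\ref{char1}: it is quoted from \cite{FF1992,Wang1997}, where the characters arise from semi-infinite cohomology / $\mathcal{W}_{1+\infty}$ representation-theoretic considerations. What you supply is a self-contained, elementary derivation: the two-variable character $\sum_l z^l\dim_q(\overline{\mathcal{F}}^l)=\prod_{n\geq1}(1+zq^n)(1+z^{-1}q^n)$ is exactly what the paper's own combinatorial count in Proposition~\ref{char} encodes, and your use of the Jacobi triple product in the shifted form $\sum_k z^kq^{k(k+1)/2}=(q;q)_\infty\prod_{n\geq1}(1+zq^n)(1+z^{-1}q^{n-1})$, followed by extraction of $[z^l]$ from $(1+z^{-1})^{-1}\sum_kz^kq^{k(k+1)/2}$, gives the $\overline{\mathcal{F}}^l$ formula directly; the passage to $\mathcal{M}^l$ via Wang's decomposition $\mathcal{M}^l\cong\overline{\mathcal{F}}^l\otimes e^{-l\beta}\mathbb{C}[y]$ and the degree assignments \eqref{eq20} is exactly the right bookkeeping, and I checked that the re-indexing $s\mapsto s\pm l$ does produce the exponent $\frac{s(s+1)}{2}+(s+\delta_{l\geq0})|l|$ in both sign regimes. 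Your handling of the one delicate point — that $(1+z^{-1})^{-1}$ must be expanded so that each $[z^l]$ is a $q$-adically convergent series, with negative $l$ recovered from the manifest $z\leftrightarrow z^{-1}$ symmetry — is adequate (alternatively, the naive expansion works for all $l$ because the terms with $l\leq s<|l|$ cancel in pairs under $s\mapsto-s-1$). The benefit of your route is that it makes the proposition independent of the cited cohomological computations and, combined with Proposition~\ref{char}, turns the subsequent $q$-series identities into genuine theorems of the paper rather than consequences of external results; the cost is that it uses the triple product as a black box where the cited references derive the characters structurally.
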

The following result is a consequence of Propositions \ref{char} and \ref{char1}.
\begin{theorem} For $l\geq 0$ one has that
\begin{align}
&\sum_{m\geq 0}\frac{q^{m}}{(q;q)_{m}(q;q)_{m+l}}=\frac{1}{(q;q)^{2}_{\infty}}\sum_{s\geq 0}(-1)^{s}q^{\frac{s(s+1)}{2}+sl},\\
&\sum_{m\geq0}\frac{q^{m^{2}+(l+1)m}}{(q;q)_{m}(q;q)_{m+l}}=\frac{1}{(q;q)_{\infty}}\sum_{s\geq 0}(-1)^{s}q^{\frac{s(s+1)}{2}+sl}.
\end{align}
\end{theorem}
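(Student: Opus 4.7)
The plan is to equate the two expressions for the $q$-dimensions of $\mathcal{M}^l$ and $\overline{\mathcal{F}}^l$ furnished by Propositions \ref{char} and \ref{char1}, and then extract the two identities by elementary manipulation of the resulting equalities. Since both $q$-dimension formulas have already been established, the theorem will be an almost immediate corollary.

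For the first identity, I would specialize the formula of Proposition \ref{char} at $l\geq 0$, where $\delta_{l\geq 0}=1$ and $|l|=l$, obtaining $\mathrm{dim}_q(\mathcal{M}^l)=\sum_{m\geq 0}q^{m+l}/[(q;q)_m(q;q)_{m+l}]$. The corresponding specialization of Proposition \ref{char1} reads $\mathrm{dim}_q(\mathcal{M}^l)=(q;q)_\infty^{-2}\sum_{s\geq 0}(-1)^s q^{s(s+1)/2+(s+1)l}$. Setting the two expressions equal, an overall factor of $q^l$ appears on both sides, and cancelling it produces the first identity exactly.

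For the second identity, I would specialize the formula from Proposition \ref{char} at $l\geq 0$ to obtain $\mathrm{dim}_q(\overline{\mathcal{F}}^l)=\sum_{m\geq 0}q^{m^2+(l+1)m+l(l+1)/2}/[(q;q)_m(q;q)_{m+l}]$, and compare it with $(q;q)_\infty^{-1}\sum_{s\geq l}(-1)^{s+l}q^{s(s+1)/2}$ from Proposition \ref{char1}. Reindexing the second sum by $s=t+l$ and using the expansion $(t+l)(t+l+1)/2=t(t+1)/2+tl+l(l+1)/2$ rewrites it as $(q;q)_\infty^{-1}\,q^{l(l+1)/2}\sum_{t\geq 0}(-1)^t q^{t(t+1)/2+tl}$, where the sign $(-1)^{s+l}=(-1)^{t+2l}$ collapses to $(-1)^t$ and the bound $s\geq l$ becomes $t\geq 0$. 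Cancelling the common factor $q^{l(l+1)/2}$ then yields the second identity.

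The argument is essentially bookkeeping; all the substantive content already resides in the two $q$-dimension formulas whose proofs exploit the bosonic and combinatorial descriptions of $\mathcal{M}^l$ and $\overline{\mathcal{F}}^l$. The only care required is in the reindexing for the second identity, namely, verifying that the alternating sign simplifies and the summation bound shifts cleanly; accordingly, I do not anticipate any real obstacle beyond keeping the exponent arithmetic straight.
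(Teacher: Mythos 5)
Your proposal is correct and is exactly the paper's route: the paper states the theorem as an immediate consequence of Propositions \ref{char} and \ref{char1}, and your cancellation of $q^{l}$ in the first identity and the reindexing $s=t+l$ with the exponent expansion $\frac{(t+l)(t+l+1)}{2}=\frac{t(t+1)}{2}+tl+\frac{l(l+1)}{2}$ in the second supply precisely the bookkeeping the paper leaves implicit.
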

Therefore we have the $q$-series identity 
\begin{align*}
\sum_{m\geq 0}\frac{q^{m}}{(q;q)_{m}(q;q)_{m+l}}=\frac{1}{(q;q)_{\infty}}\sum_{m\geq0}\frac{q^{m^{2}+(l+1)m}}{(q;q)_{m}(q;q)_{m+l}}.
\end{align*}
Recalling that $\mathcal{F}^{l}\cong e^{-l\alpha}\mathbb{C}[x]$ \cite{KRR2013} and $\mathrm{deg}(e^{-l\alpha}x_k)=k+\frac{l(l+1)}{2}$, so we also have
\begin{align}\label{sp5}
\mathrm{dim}_q(\mathcal{F}^{l})=\frac{q^{\frac{l(l+1)}{2}}}{(q;q)_{\infty}}.
\end{align}

The following result follows from (\ref{sp6}) and (\ref{sp5}).
\begin{theorem} One has that
\begin{align}
\sum_{m\geq0}\frac{q^{m^{2}+m|l|}}{(q;q)_{m}(q;q)_{m+|l|}}= \frac{1}{(q;q)_{\infty}}.
\end{align}
In particular,
the famous Euler identity is the case of $l=0$.
\end{theorem}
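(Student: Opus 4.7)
The plan is to observe that the identity in the final theorem is simply the assertion that the two expressions for $\mathrm{dim}_q(\mathcal{F}^{l})$ given just before the theorem (one coming from the combinatorial computation in Proposition \ref{char}, the other from the bosonic realization $\mathcal{F}^l\cong e^{-l\alpha}\mathbb{C}[x]$) must agree. All the real work has been done in establishing those two formulas; the last theorem is just their comparison.

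Concretely, I would proceed as follows. First, I recall the two identities
\begin{align*}
\mathrm{dim}_q(\mathcal{F}^{l}) &= \sum_{m\geq 0}\frac{q^{m^{2}+m|l|+\frac{l(l+1)}{2}}}{(q;q)_{m}(q;q)_{m+|l|}} \qquad \text{(from \eqref{sp6})},\\
\mathrm{dim}_q(\mathcal{F}^{l}) &= \frac{q^{\frac{l(l+1)}{2}}}{(q;q)_{\infty}} \qquad \text{(from \eqref{sp5})}.
\end{align*}
Equating them and cancelling the common factor $q^{l(l+1)/2}$ from both sides yields exactly
\[
\sum_{m\geq 0}\frac{q^{m^{2}+m|l|}}{(q;q)_{m}(q;q)_{m+|l|}} = \frac{1}{(q;q)_{\infty}},
\]
which is the desired identity.

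Finally, I would note the special case $l=0$: the formula reduces to
\[
\sum_{m\geq 0}\frac{q^{m^{2}}}{(q;q)_{m}^{2}} = \frac{1}{(q;q)_{\infty}},
\]
which is the classical Euler identity, thereby giving the remark in the theorem a clean derivation from the $q$-dimension computation of the charge-zero sector $\mathcal{F}^{0}$.

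There is essentially no obstacle here: both \eqref{sp6} and \eqref{sp5} have been proved earlier (the former by the partition-counting lemmas \ref{pro9} and \ref{cor1}, the latter from the free-field realization $\mathcal{F}^{l}\cong e^{-l\alpha}\mathbb{C}[x]$ with the degree assignment \eqref{eq20}), so the proof is a one-line comparison. The only minor care is to make sure the exponent $\frac{l(l+1)}{2}$ in \eqref{sp6} really matches the one in \eqref{sp5} so that it drops out uniformly; this is automatic since both come from the same grading on $e^{-l\alpha}$ via \eqref{eq20}.
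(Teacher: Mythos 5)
Your proposal is correct and is exactly the paper's argument: the theorem is stated as an immediate consequence of equating the combinatorial formula \eqref{sp6} for $\mathrm{dim}_q(\mathcal{F}^{l})$ with the formula \eqref{sp5} coming from $\mathcal{F}^{l}\cong e^{-l\alpha}\mathbb{C}[x]$, and cancelling the common factor $q^{\frac{l(l+1)}{2}}$. Nothing further is needed.
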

\section{Appendix A}\label{app1}
In the Appendix we shall prove Eq. (\ref{eq26}) in Theorem \ref{th2}.
\begin{proof} It is enough to show that
\begin{align}
\varphi^{n}_{-j}\varphi^{*n}_{-1}\cdot 1=n!\sum^{n}_{i=0}C^{i}_{n}(-1)^{i}S^{i}_{j-1}(-x-y)\left(S_{j+1}(-x-y)+S_{1}(x+y)S_{j}(-x-y)\right)^{n-i}S^{*}_{i},
\end{align}
where $S^{*}_{n}$ is defined by
\begin{align}\label{eq28}
\exp\left(-\sum_{n\geq 1}\sum_{0\leq i\leq n}\frac{C^{i}_{n}S^{n-i}_{1}(x+y)(n+i)x_{n+i}}{n}z^{n}\right)=\sum_{n\geq 0}S^{*}_{n}z^{n}.
\end{align}
\par As in the proof of Lemma \ref{le3}, we claim that
\begin{align}\label{ap1}
\varphi^{*n}_{-1}\cdot 1=(-1)^{n}n!p^{n}S^{*}_{n}.
\end{align}
This is verified by using the same method.
Set $R=(-1)^{n}n!p^{n}S^{*}_{n}$ and
just as in \eqref{eq30} we have
\begin{align*}
\varphi^*(z)\varphi^{*n}_{-1}\cdot 1=p\exp\left(\sum_{i\geq 1}(\partial x_{i}+\partial y_{i})z^i\right)(A+B+C)R,
\end{align*}
where $A, B, C$ are defined in \eqref{eqABC}.

Applying $\partial_{z}$ to two sides of (\ref{eq28}), we have
\begin{align*}
\sum_{k\geq 1}\sum_{\frac{k}{2}\leq n\leq k}C^{k-n}_{n}S^{2n-k}_{1}(x+y)kx_{k}S^{*}_{s-n}=-sS^{*}_{s}.
\end{align*}
From
\begin{align*}
&\exp\left(-\sum_{i\geq 1}(\partial x_{i}-\partial y_{i})\frac{z^{-i}}{i}\right)\exp\left(-\sum_{n\geq 1}\sum_{0\leq i\leq n}\frac{C^{i}_{n}S^{n-i}_{1}(x+y)(n+i)x_{n+i}}{n}w^{n}\right)\\
=&\frac{1}{1-(z^{-2}+S_{1}(x+y)z^{-1})w}\exp\left(-\sum_{n\geq 1}\sum_{0\leq i\leq n}\frac{C^{i}_{n}S^{n-i}_{1}(x+y)(n+i)x_{n+i}}{n}w^{n}\right),
\end{align*}
we get
\begin{align*}
\exp\left(-\sum_{i\geq 1}(\partial x_{i}-\partial y_{i})\frac{z^{-i}}{i}\right)S^{*}_{n}=\sum^{n}_{i=0}(z^{-2}+S_{1}(x+y)z^{-1})^{i}S^{*}_{n-i}.
\end{align*}

Therefore, the coefficient of $z^{0}$ in $(A+B+C)R$ is
\begin{align}
a_{0}:=(-1)^{n} n!p^{n}\sum_{k> 0}\sum^{n}_{i=0}C^{k-1-i}_{i}S^{2i+1-k}_{1}(x+y)kx_{k}S^{*}_{n-i},
\end{align}
and the $z$-coefficient is 
\begin{align}
a_{1}:=(-1)^{n} n!p^{n}\sum_{k> 0}\sum^{n}_{i=0}C^{k-2-i}_{i}S^{2i+2-k}_{1}(x+y)kx_{k}S^{*}_{n-i}.
\end{align}
Since
\begin{align}
\beta_{i}(\varphi^{*j}_{-1}\cdot 1)=-j\varphi^{*j-1}_{-1}\varphi^{*}_{i-1}\cdot 1=0, ~~i\geq 2,
\end{align}
$S_k(x+y)$ does not appear in $\varphi^{*n+1}_{-1}\cdot 1$ for $k\geq 2$.
Thus
\begin{align*}
\varphi^{*n+1}_{-1}\cdot 1&=p(-1)^{n}n!p^{n}\left(a_{0}S_{1}(x+y)+a_{1}S_0(x+y)\right)\\
&=(-1)^{n}n!p^{n+1}\sum_{k> 0}\sum^{n}_{i=0}(C^{k-1-i}_{i}+C^{k-2-i}_{i})S^{2i+2-k}_{1}(x+y)kx_{k}S^{*}_{n-i}\\
&=(-1)^{n}n!p^{n+1}\sum_{k> 0}\sum^{n}_{i=0}C^{k-1-i}_{i+1}S^{2i+2-k}_{1}(x+y)kx_{k}S^{*}_{n-i}\\
&=(-1)^{n}n!p^{n+1}\sum_{k> 0}\sum_{\frac{k}{2}\leq t \leq k}C^{k-t}_{t}S^{2t-k}_{1}(x+y)kx_{k}S^{*}_{n+1-t}\\
&=(-1)^{n+1}(n+1)!p^{n+1}S^{*}_{n+1}.
\end{align*}
Thus \eqref{ap1} is proved.

By the formula
\begin{align*}
&\exp\left(\sum(\partial x_{i}-\partial y_{i})\frac{z^{-i}}{i}\right)\exp\left(-\sum_{n\geq 1}\sum_{0\leq i\leq n}\frac{C^{i}_{n}S^{n-i}_{1}(x+y)(n+i)x_{n+i}}{n}w^{n}\right)\\
=&\left(1-\left(z^{-2}+S_{1}(x+y)z^{-1}\right)w\right)\exp\left(-\sum_{n\geq 1}\sum_{0\leq i\leq n}\frac{C^{i}_{n}S^{n-i}_{1}(x+y)(n+i)x_{n+i}}{n}w^{n}\right),
\end{align*}
we then have
\begin{align*}
\exp\left(\sum(\partial x_{i}-\partial y_{i})\frac{z^{-i}}{i}\right)S^{*}_{n}=S^{*}_{n}-(z^{-2}+S_{1}(x+y)z^{-1})S^{*}_{n-1}.
\end{align*}
Thus
\begin{align}\label{eq21}
\varphi_{-j}S^{*}_{n}=p^{-1}\left(S_{j-1}(-x-y)S^{*}_{n}-\left(S_{j+1}(-x-y)+S_{1}(x+y)S_{j}(-x-y)\right)S^{*}_{n-1}\right).
\end{align}
Repeatedly using (\ref{eq21}) we then have
\begin{align*}
\varphi^{n}_{-j}S^{*}_{n}=p^{-n}\sum^{n}_{i=0}C^{i}_{n}(-1)^{n-i}S^{i}_{j-1}(-x-y)(S_{j+1}(-x-y)+S_{1}(x+y)S_{j}(-x-y))^{n-i}S^{*}_{i}.
\end{align*}
Thus
\begin{align*}
\varphi^{n}_{-j}\varphi^{*n}_{-1}\cdot 1=(-1)^{n}n!\sum^{n}_{i=0}C^{i}_{n}(-1)^{n-i}S^{i}_{j-1}(-x-y)\left(S_{j+1}(-x-y)+S_{1}(x+y)S_{j}(-x-y)\right)^{n-i}S^{*}_{i}.
\end{align*}
\end{proof}

\section*{Appendix B}
Here we prove \eqref{e:B1}. The proof of \eqref{e:B2} is similar.

In fact, it follows from \eqref{e:c1} and \eqref{e:c2} that
\begin{align*}
\beta_k=-\sum_{j\in\mathbb{Z}}:\varphi_j\varphi^*_{-j+k}:=-\sum_{j\geq 0}\varphi^*_{-j+k}\varphi_j-\sum_{j< 0}\varphi_j\varphi^*_{-j+k},
\end{align*}
it is seen that $\langle 0|\beta_k=0$, for $k\leq 0$ by using \eqref{e:B3}.

Recall \eqref{ch3-1}, we have 
\begin{align*}
\langle 0|\varphi(z)=\langle 0|c(z)Y(e^{-\beta},z).
\end{align*}
Then
\begin{align*}
\langle 0|\sum_{i\in \mathbb{Z}}\varphi_iz^{-i-1}
=&\langle 0|\sum_{n\in\mathbb{Z}}c(n)z^{-n-1}e^{-\beta}z^{-\beta_{0}}
\exp\left(-\sum_{j<0}\frac{z^{-j}}{-j}\beta_{j}\right)\exp\left(-\sum_{j>0}\frac{z^{-j}}{-j}\beta_{j}\right)\\
=&z\langle0|z^{-\beta_{0}}\exp\left(-\sum_{j<0}\frac{z^{-j}}{-j}\beta_{j}\right)\exp\left(-\sum_{j>0}\frac{z^{-j}}{-j}\beta_{j}\right)\sum_{j\in\mathbb{Z}}c(n)z^{-n-1}e^{-\beta}\\
=&\langle0|\exp\left(-\sum_{j>0}\frac{z^{-j}}{-j}\beta_{j}\right)\sum_{j\in\mathbb{Z}}c(n)z^{-n}e^{-\beta}\\
=&\langle0|\sum_{j\in\mathbb{Z}}c(n)z^{-n}e^{-\beta}\exp\left(-\sum_{j>0}\frac{z^{-j}}{-j}\beta_{j}\right),
\end{align*}
where we have used $e^{-\beta}z^{-\beta_{0}}=zz^{-\beta_{0}}e^{-\beta}$ and the properties of lattice vertex algebras.
Thus
\begin{align}\label{e:ap1}
\langle 0|\sum_{i\geq 0}\varphi_iz^{-i-1}=\langle0|\sum_{j\in\mathbb{Z}}c(n)z^{-n}e^{-\beta}\exp\left(-\sum_{j>0}\frac{z^{-j}}{-j}\beta_{j}\right).
\end{align}
Comparing coefficients of $z^{n}$ for $n\geq 0$  
the right side must be zero. Therefore, we have
\begin{align}
\langle0|c(n)=0,~~n\leq 0.
\end{align}
Similarly, we have
\begin{align}
\langle 0|\sum_{i\geq 1}\varphi^*_iz^{-i}=\langle0|\sum_{n\in\mathbb{Z}}(-n)b(n)z^{-n}e^{\beta}\exp\left(\sum_{j>0}\frac{z^{-j}}{-j}\beta_{j}\right).
\end{align}
So 
\begin{align}
\langle0|b(n)=0, n<0.
\end{align}
Therefore, equation \eqref{e:B1} holds.
\begin{remark}
The condition $\langle0|c(0)=c(0)|0\rangle=0$ does not affect the proof of Borchardt's identity since $\varphi_i,~\varphi^*_i,~i\in \mathbb{Z}$ do not depend on $b(0)$.
\end{remark}

\centerline{\bf Acknowledgments}
The work is partially supported by the National Natural Science Foundation of China (grant no. 11531004) and the Simons Foundation (grant no. 523868).

\end{document}